\tikzset{->-/.style={decoration={
  markings,
  mark=at position #1 with {\arrow{Computer Modern Rightarrow[length=5pt,width=5pt]}}},postaction={decorate}}}
\tikzset{->-rev/.style={decoration={
  markings,
  mark=at position #1 with {\arrow{Computer Modern Rightarrow[length=5pt,width=5pt,reversed]}}},postaction={decorate}}}
\title[Endomorphisms of Artin groups of type $\tilde A_n$]{Endomorphisms of Artin groups of type $\tilde A_n$}
\author[L Paris]{Luis Paris}
\address{Luis Paris, IMB, UMR 5584, CNRS, Université  
Bourgogne Europe, 21000 Dijon, France
}
\email{lparis@u-bourgogne.fr}
\author[I Soroko]{Ignat Soroko}
\address{Ignat Soroko, Department of Mathematics, University of North Texas, Denton, TX 76203-5017, USA}
\email{ ignat.soroko@unt.edu, ignat.soroko@gmail.com}
\newtheorem{thm}{Theorem}[section]
\newtheorem{lem}[thm]{Lemma}
\newtheorem{prop}[thm]{Proposition}
\newtheorem{corl}[thm]{Corollary}
\theoremstyle{definition}
\newtheorem{rem}{Remark}
\newtheorem*{acknow}{Acknowledgments}
\numberwithin{equation}{section}
\renewcommand{\thefigure}{\ifnum \c@section>\z@ \thesection.\fi
 \@arabic\c@figure}
\begin{document}

\raggedbottom

\def\N{\mathbb N} \def\conj{{\rm conj}} \def\Aut{{\rm Aut}}
\def\Inn{{\rm Inn}} \def\Out{{\rm Out}} \def\Z{\mathbb Z}
\def\id{{\rm id}} \def\Im{{\rm Im}} \def\supp{{\rm supp}}
\def\Ker{{\rm Ker}} \def\PP{\mathcal P} \def\Homeo{{\rm Homeo}}
\def\SHomeo{{\rm SHomeo}} \def\LHomeo{{\rm LHomeo}}
\def\MM{\mathcal M} \def\CC{\mathcal C} \def\AA{\mathcal A}
\def\S{\mathbb S} \def\FF{\mathcal F} \def\SS{\mathcal S}
\def\LL{\mathcal L} \def\D{\mathbb D} \def\Fix{{\rm Fix}}

\newcommand{\card}{\operatorname{Card}}
\newcommand{\Sym}{\operatorname{Sym}}
\newcommand{\rel}{\operatorname{rel}}
\newcommand{\genus}{\operatorname{genus}}
\renewcommand{\le}{\leqslant}
\renewcommand{\ge}{\geqslant}


\begin{abstract}
We determine a classification of the endomorphisms of the Artin group of affine type $\tilde A_n$ for $n\ge 4$. 

\smallskip\noindent
{\bf AMS Subject Classification\ \ } 
Primary: 20F36, secondary: 57K20.

\smallskip\noindent
{\bf Keywords\ \ } 
Artin groups of type $\tilde A_n$, endomorphisms, automorphisms.

\end{abstract}

\maketitle


\section{Introduction}\label{sec1}

Let $S$ be a finite set.
A \emph{Coxeter matrix} over $S$ is a square matrix $M=(m_{s,t})_{s,t\in S}$ indexed by the elements of $S$, with coefficients in $\N \cup \{\infty\}$, such that $m_{s,s}=1$ for all $s \in S$, and $m_{s,t} = m_{t,s} \ge 2$ for all $s,t\in S$, $s\neq t$.
Such a matrix is usually represented by a labeled graph, $\Gamma$, called a \emph{Coxeter graph}, defined by the following data.
The set of vertices of $\Gamma$ is $S$.
Two vertices $s,t\in S$ are connected by an edge if $m_{s,t}\ge 3$, and this edge is labeled with $m_{s,t}$ if $m_{s,t} \ge 4$.

If $a,b$ are two letters and $m$ is an integer $\ge 2$, then we denote by $\Pi(a,b,m)$ the alternating word $aba\ldots$ of length $m$.
In other words, $\Pi(a,b,m) = (ab)^{\frac{m}{2}}$ if $m$ is even, and $\Pi(a,b,m) = (ab)^{\frac{m-1}{2}} a$ if $m$ is odd.
Let $\Gamma$ be a Coxeter graph and let $M=(m_{s,t})_{s,t\in S}$ be its Coxeter matrix.
With $\Gamma$ we associate a group, $A[\Gamma]$, called the \emph{Artin group} of $\Gamma$, defined by the presentation
\[
A[\Gamma] = \langle S \mid \Pi(s,t,m_{s,t}) = \Pi(t,s,m_{s,t}) \text{ for } s, t \in S\,,\ s \neq t\,,\ m_{s,t}\neq \infty \rangle\,.
\]
The \emph{Coxeter group} of $\Gamma$, denoted by $W[\Gamma]$, is the quotient of $A[\Gamma]$ by the relations $s^2 = 1$, $s \in S$.

Despite the popularity of Artin groups, little is known about their automorphisms and even less about their endomorphisms.
The most studied cases are the braid groups, corresponding to the Coxeter graphs $A_n$ ($n\ge 1$), and the right-angled Artin groups.
The automorphism group of $A[A_n]$ was determined in~\cite{DyeGro1} and the set of its endomorphisms in~\cite{Caste1} for $n\ge 5$, in~\cite{ChKoMa1} for $n \ge 4$ and in~\cite{Orevk1} for $n\ge2$ (see also~\cite{BelMar1,KorMar1}).
On the other hand, there are many papers dealing with automorphism groups of right-angled Artin groups (see~\cite{BrChVo1,ChaVog1,ChaVog2,Day1,Day2,Laure1} for example), but little is known about their endomorphisms.

Apart from these two classes, the Artin groups for which the automorphism group is determined are the $2$-generator Artin groups~\cite{GiHoMeRa1}, some $2$-dimensional Artin groups~\cite{Crisp1,AnCho1}, the large-type free-of-infinity Artin groups~\cite{Vasko1}, the Artin groups of type $B_n$, $\tilde A_n$ and $\tilde C_n$~\cite{ChaCri1}, the Artin group of type $D_4$~\cite{Sorok1}, and the Artin groups of type $D_n$ for $n\ge 6$~\cite{CasPar1}.
On the other hand, apart from Artin groups of type $A_n$, 
the set of endomorphisms is determined only for Artin groups of type $D_n$ for $n\ge6$~\cite{CasPar1}.

The aim of the present paper is to determine a classification of the endomorphisms of the Artin groups of type $\tilde A_n$ for $n\ge 4$, where $\tilde A_n$ is the affine Coxeter graph depicted in Figure~\ref{fig1_1} (see Theorem~\ref{thm2_1}). 

\begin{figure} 
\begin{center}
\includegraphics[width=4cm]{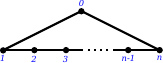}
\caption{Coxeter graph $\tilde A_n$, $n\ge2$}\label{fig1_1}
\end{center}
\end{figure}

A group $G$ is called \emph{Hopfian} if every surjective homomorphism $\psi\colon G \to G$ is injective, and it is called \emph{co-Hopfian} if every injective homomorphism $\psi\colon G \to G$ is surjective.
An easy consequence of our main theorem is that $A[\tilde A_n]$ is both Hopfian and co-Hopfian (see Corollary~\ref{corl2_5}).
The fact that all finite index subgroups (including $A[\tilde A_n]$) of the mapping class group of the punctured sphere are co-Hopfian was proved in~\cite[Corollary~3]{BelMar2}. It is also 
known that the mapping class group of a compact connected orientable surface of genus $g\ge 2$ is co-Hopfian~\cite{IvaMcc1}, and that a virtual braid group is co-Hopfian~\cite{BelPar1}. On the other hand, spherical type Artin groups are never co-Hopfian, since they admit transvection endomorphisms which are injective but not surjective, see~\cite{BelMar1}, but some of them (of types $A_n$, $B_n$, $D_n$) are known to be almost co-Hopfian, i.e.\ their quotients by the center are co-Hopfian~\cite{BelMar1,BelMar2,CasPar1}.

The results of the present paper are also used in the subsequent paper~\cite{ParSor1} by the authors, where a classification of endomorphisms of the Artin groups of type $B_n$, for $n\ge5$, and of their quotients by the center, is determined.

The paper is organized as follows.
In Section~\ref{sec2} we give definitions and precise statements of our results.
Section~\ref{sec3} contains preliminaries and Section~\ref{sec4} contains the proofs. 

\begin{acknow}
This work was started at the AIM workshop ``Geometry and topology of Artin groups'' organized by Ruth Charney, Kasia Jankiewicz, and Kevin Schreve in September 2023, and continued at the SLMath workshop ``Hot Topics: Artin Groups and Arrangements -- Topology, Geometry, and Combinatorics'' organized by Christin Bibby, Ruth Charney, 
Giovanni Paolini, and Mario Salvetti in March 2024. We thank the organizers, the AIM and SLMath. We also thank the referee who carefully read the manuscript and suggested several improvements that increased clarity and readability of the exposition. 
The first author is partially supported by the French project ``AlMaRe'' (ANR-19-CE40-0001-01) of the ANR.
The second author acknowledges support from the AMS--Simons travel grant.
\end{acknow}


\section{Definitions and statements}\label{sec2}

Two other Coxeter graphs play an important role in our study: the Coxeter graphs $A_n$ and $B_n$ depicted in Figure~\ref{fig2_1}.

\begin{figure}[ht!]
\begin{center}
\begin{tabular}{cc}
\quad\includegraphics[width=4cm]{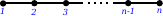}\quad&
\quad\includegraphics[width=4cm]{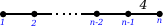}\quad\\
$A_n$&$B_n$
\end{tabular}
\caption{Coxeter graphs $A_n$ and $B_n$}\label{fig2_1}
\end{center}
\end{figure}

The standard generators of $A[A_n]$ will be denoted by $s_1,\dots,s_n$, the standard generators of $A[B_n]$ will be denoted by $r_1,\dots,r_n$, and the standard generators of $A[\tilde A_n]$ will be denoted by $t_0,t_1,\dots,t_n$.

Let $\Gamma$ be a Coxeter graph.
For $X\subset S$ we denote by $\Gamma_X$ the full subgraph of $\Gamma$ spanned by $X$, by $A_X[\Gamma]$ the subgroup of $A[\Gamma]$ generated by $X$, and by $W_X[\Gamma]$ the subgroup of $W[\Gamma]$ generated by $X$.
We know by~\cite{Lek1} that $A_X[\Gamma]$ is naturally isomorphic to $A[\Gamma_X]$, and we know by~\cite[Chapter~4,\,Section~1.8,\,Theorem~2(i)]{Bourb1} that $W_X[\Gamma]$ is naturally isomorphic to $ W[\Gamma_X]$.
A subgroup of the form $A_X[\Gamma]$ is called a \emph{standard parabolic subgroup} of $A[\Gamma]$, and a subgroup of the form $W_X[\Gamma]$ is called a \emph{standard parabolic subgroup} of $W[\Gamma]$.

The word length of an element $w$ in $W[\Gamma]$ with respect to $S$ is denoted by $\lg(w)$.
A \emph{reduced expression} of $w$ is a word $s_1s_2\ldots s_\ell$ over $S$ representing $w$ such that $\ell=\lg(w)$.
We denote by $\omega\colon A[\Gamma]\to W[\Gamma]$ the natural epimorphism which sends $s$ to $s$ for all $s\in S$.
This epimorphism admits a natural set-section $\tau\colon W[\Gamma]\to A[\Gamma]$ defined as follows.
Let $w\in W[\Gamma]$, and let $s_1s_2\ldots s_\ell$ be a reduced expression of $w$.
Then $\tau(w)$ is the element of $A[\Gamma]$ represented by $s_1s_2\ldots s_\ell$.
By~\cite{Tits1} the definition of $\tau(w)$ does not depend on the choice of the reduced expression.

We say that $\Gamma$ is of \emph{spherical type} if $W[\Gamma]$ is finite.
In this case $W[\Gamma]$ has a unique element of maximal length, denoted by $w_0$, and this element satisfies $w_0^2=1$ and $w_0Sw_0=S$ (see~\cite[Lemma~4.6.1]{Davis1}).
If $\Gamma$ is of spherical type, then the \emph{Garside element} of $A[\Gamma]$, denoted by $\Delta = \Delta [\Gamma]$, is defined to be $\tau(w_0)$.
More generally, if $\Gamma$ is any Coxeter graph and $X\subset S$ is such that $\Gamma_X$ is of spherical type, then we denote by $\Delta_X=\Delta_X[\Gamma]$ the Garside element of $A_X[\Gamma]=A[\Gamma_X] \subset A[\Gamma]$.
If $\Gamma$ is connected and of spherical type, then the center of $A[\Gamma]$, denoted by $Z(A[\Gamma])$, is an infinite cyclic group generated by either $\Delta$ or $\Delta^2$ (see~\cite{BriSai1}).
If $\Gamma$ is connected and not of spherical type, then it is conjectured that $A[\Gamma]$ has trivial center.
This conjecture is proved in many cases but remains open in the whole generality.

The Coxeter graphs $B_n$ and $A_n$ are of spherical type, while the Coxeter graph $\tilde A_n$ is not of spherical type (it is of \emph{affine} type).
We also know that the center of $A[\tilde A_n]$ is trivial (see~\cite[Proposition 1.3]{ChaPei1}).
The Garside element of $A[A_n]$ is
\[
\Delta=\Delta[A_n]=(s_1s_2\ldots s_n)(s_1s_2\ldots s_{n-1})\ldots (s_1s_2)s_1\,,
\]
see~\cite[Theorem~1.24]{KasTur1}; we have $\Delta s_i \Delta^{-1} = s_{n+1-i}$ for all $1\le i\le n$, and so $Z(A[A_n])$ is generated by $\Delta^2$.
The Garside element of $A[B_n]$ is
\[
\Delta[B_n]=(r_1\ldots r_n)^n\,,
\]
and it generates the center of $A[B_n]$, see~\cite[Satz~7.2]{BriSai1} and \cite[Chapter\,VI,\,Section~4,\,n$^\circ5$,\,(III)]{Bourb1}.

If $G$ is a group and $g\in G$, then we denote by $\conj_g\colon G\to G$, $h \mapsto g h g^{-1}$, the conjugation by $g$.
We have a homomorphism $\conj\colon G\to\Aut(G)$, $g\mapsto\conj_g$, whose image is the group $\Inn(G)$ of \emph{inner automorphisms}, and whose kernel is the center of $G$.
The group $\Inn(G)$ is a normal subgroup of $\Aut(G)$, and the quotient $\Out(G)=\Aut(G)/\Inn(G)$ is the \emph{outer automorphism group} of $G$.
Two homomorphisms $\varphi_1,\varphi_2\colon G\to H$ are said to be \emph{conjugate} if there exists $h\in H$ such that $\varphi_2=\conj_h\circ\varphi_1$.
Our classification of endomorphisms of $A[\tilde A_n]$ will be made up to conjugation.

Three automorphisms $\zeta,\eta,\mu\in\Aut(A[\tilde A_n])$ play a central role in our study.
These are defined by
\[
\zeta(t_i)=t_{i+1}\text{ for }0\le i\le n\,,\
\eta(t_i)=t_{n-i}\text{ for }0\le i\le n\,,\
\mu(t_i)=t_i^{-1}\text{ for }0\le i\le n\,,
\]
where the indices are taken modulo $n+1$.
Note that $\zeta,\eta$ generate the automorphism group of the Coxeter graph $\tilde A_n$, denoted by $\Aut(\tilde A_n)$, which is isomorphic to the dihedral group $D_{2(n+1)}$ of order $2(n+1)$.
The subgroup of $\Aut(A[\tilde A_n])$ generated by $\zeta,\eta,\mu$ is isomorphic to $\Aut(\tilde A_n)\times\Z/2\Z$.
It will be denoted by $\Aut^*(\tilde A_n)$.

\begin{rem}
We know that $A[\tilde A_n]$ embeds into $A[B_{n+1}]$ (see~\cite{KenPei1} and/or Proposition~\ref{prop3_1} below), and that $A[B_{n+1}]$ is torsion-free (see~\cite{BriSai1,Delig1}), hence $A[\tilde A_n]$ is torsion-free.
We also know that the center of $A[\tilde A_n]$ is trivial, hence $\Inn(A[\tilde A_n])\simeq A[\tilde A_n]$ is torsion-free.
Since $\Aut^*(\tilde A_n)$ is finite, it follows that $\Aut^*(\tilde A_n)\cap\Inn(A[\tilde A_n])=\{1\}$, hence $\Aut^*(\tilde A_n)$ embeds into $\Out(A[\tilde A_n])$ via the projection $\Aut(A[\tilde A_n])\to\Out(A[\tilde A_n])$.
\end{rem}

A homomorphism $\varphi\colon G\to H$ is called \emph{abelian} (resp. \emph{cyclic}) if its image is an abelian (resp. cyclic) subgroup of $H$.
A homomorphism $\varphi\colon A[A_n]\to H$ is abelian, if and only if it is cyclic, if and only if there exists $h\in H$ such that $\varphi(s_i)=h$ for all $1\le i\le n$.
Similarly, a homomorphism $\varphi\colon A[\tilde A_n]\to H$ is abelian, if and only if it is cyclic, if and only if there exists $h\in H$ such that $\varphi(t_i)=h$ for all $0\le i\le n$.

We set $Y=\{t_1,t_2,\dots,t_n\}$.
Notice that the full subgraph of $\tilde A_n$ spanned by $Y$ is isomorphic to $A_n$.
We denote by $\Delta_Y=\Delta_Y[\tilde A_n]$ the Garside element of $A_Y[\tilde A_n]$.
Furthermore, we set $\rho=t_1t_2\ldots t_n\in A_Y[\tilde A_n]$ and $\rho'=t_1^{-1}t_2^{-1}\ldots t_n^{-1}\in A_Y[\tilde A_n]$.
A direct calculation shows that $\rho t_i\rho^{-1}=\rho't_i\rho'^{-1}=t_{i+1}$ for all $1\le i\le n-1$ and $\rho^2 t_n\rho^{-2}=\rho'^2t_n\rho'^{-2}=t_1$.
Let $v_0=\rho t_n\rho^{-1}$ and $v_1=\rho't_n\rho'^{-1}$.
Then, for each $p\in\Z$, we have endomorphisms $\alpha_p,\beta_p\colon A[\tilde A_n]\to A[\tilde A_n]$ defined by
\[
\alpha_p(t_i)=\beta_p(t_i)=t_i\Delta_Y^{2p}\text{ for }1\le i\le n\,,\
\alpha_p(t_0)=v_0\Delta_Y^{2p}\,,\
\beta_p(t_0)=v_1\Delta_Y^{2p}\,.
\]
Note that $\Im(\alpha_p),\Im(\beta_p)\subset A_Y[\tilde A_n]\simeq A[A_n]$. Figure~\ref{fig_v01} depicts the standard generators $t_1,\dots,t_n$ and elements $v_0$ and $v_1$ interpreted as braids on $n+1$ strands. We note that, by Corollary~\ref{corl2_3} below, endomorphisms $\alpha_p$ and $\beta_p$ are not injective.

\begin{figure}[htb]
\begin{center}
\begin{tikzpicture}[very thick]

\begin{scope}[xscale=0.75,yscale=0.75]
\draw (0,0)--(1,3);
\draw (0,3)--(0.4,1.8); \draw (0.6,1.2)--(1,0);
\draw (1.5,0)--(1.5,3);
\draw (2.25,0)--(2.25,3);
\draw (1.25,-0.5) node {$t_1$};
\end{scope}

\begin{scope}[xshift=3.125cm]
\begin{scope}[xscale=0.75,yscale=0.75]
\draw (1,0)--(2,3);
\draw (1,3)--(1.4,1.8); \draw (1.6,1.2)--(2,0);
\draw (0.5,0)--(0.5,3);
\draw (2.5,0)--(2.5,3);
\draw (1.5,-0.5) node {$t_2$};
\end{scope}
\end{scope}

\begin{scope}[xshift=6.25cm]
\begin{scope}[xscale=0.75,yscale=0.75]
\draw (2,0)--(3,3);
\draw (2,3)--(2.4,1.8); \draw (2.6,1.2)--(3,0);
\draw (0.75,0)--(0.75,3);
\draw (1.5,0)--(1.5,3);
\draw (1.75,-0.5) node {$t_3$};
\end{scope}
\end{scope}

\begin{scope}[xshift=10.25cm]
\begin{scope}[xscale=0.5625,yscale=0.75]
\draw (1,0)--(1,3);
\draw (2,0)--(2,3);
\draw (0,3)--(0.8,2.2); \draw (1.2,1.8)--(1.4,1.6); \draw (1.6,1.4)--(1.8,1.2); \draw (2.2,0.8)--(3,0);
\draw (0,0)--(0.8,0.8); \draw (1.2,1.2)--(1.8,1.8); \draw (2.2,2.2)--(3,3);
\draw (1.5,-0.5) node {$v_0$};
\end{scope}
\end{scope}

\begin{scope}[xshift=13.75cm]
\begin{scope}[xscale=0.5625,yscale=0.75]
\draw (0,0)--(3,3);
\draw (0,3)--(1.4,1.6); \draw (1.6,1.4)--(3,0);
\draw (1,3)--(1,2.2); \draw (1,1.8)--(1,1.2); \draw (1,0.8)--(1,0);
\draw (2,3)--(2,2.2); \draw (2,1.8)--(2,1.2); \draw (2,0.8)--(2,0);
\draw (1.5,-0.5) node {$v_1$};
\end{scope}
\end{scope}

\end{tikzpicture}
\caption{Braid pictures of the standard generators $t_1,\dots,t_n$ and elements $v_0$ and $v_1$ used in the definition of endomorphisms $\alpha_p$ and $\beta_p$, depicted here for $n=3$.\label{fig_v01}}
\end{center}
\end{figure}

The main result of the present paper is the following.

\begin{thm}\label{thm2_1}
Let $n\ge 4$.
Let $\varphi\colon A[\tilde A_n]\to A[\tilde A_n]$ be an endomorphism.
Then we have one of the following four possibilities up to conjugation.
\begin{itemize}
\item[(1)]
$\varphi$ is cyclic.
\item[(2)]
$\varphi\in\Aut^*(\tilde A_n)$.
\item[(3)]
There exist $p\in\Z$ and $\psi\in\Aut^*(\tilde A_n)$ such that $\varphi=\psi\circ\alpha_p$.
\item[(4)]
There exist $p\in\Z$ and $\psi\in\Aut^*(\tilde A_n)$ such that $\varphi=\psi\circ\beta_p$.
\end{itemize}
\end{thm}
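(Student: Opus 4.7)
The plan is to restrict $\varphi$ to the type-$A$ parabolic subgroup $A_Y[\tilde A_n]\cong A[A_n]$, invoke the classification of homomorphisms out of braid groups, and then extend to the extra generator $t_0$ using the circuit relations of $\tilde A_n$. Throughout one works up to conjugation and, when needed, post-composition by elements of the finite group $\Aut^*(\tilde A_n)$.

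First, if the image of $\varphi$ is abelian --- equivalently cyclic, since every pair $t_i,t_j$ is conjugate in $A[\tilde A_n]$ --- we are in case (1). Otherwise, set $\psi=\varphi|_{A_Y[\tilde A_n]}$ and, via the embedding $A[\tilde A_n]\hookrightarrow A[B_{n+1}]$ of Proposition~\ref{prop3_1}, view $\psi$ as a homomorphism $A[A_n]\to A[B_{n+1}]$. I would first check that $\psi$ is itself non-cyclic: if $\psi$ were cyclic then the two braid relations of $t_0$ with $t_1$ and $t_n$, together with the commutation of $t_0$ with $t_2,\dots,t_{n-1}$, would force $\varphi$ itself to be cyclic, a contradiction. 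For $n\ge 4$, the classification of non-cyclic homomorphisms from $A[A_n]$ to Artin groups of this form, in the spirit of~\cite{Caste1,ChKoMa1} and as adapted to non-braid targets in~\cite{CasPar1}, then restricts $\psi$, up to conjugation and precomposition by an automorphism of $A[A_n]$, to one of two shapes: either a transvection $t_i\mapsto t_i\Delta_Y^{2p}$ landing in $A_Y[\tilde A_n]$, or the inclusion $t_i\mapsto t_i$ post-composed with an element of $\Aut^*(\tilde A_n)$.

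With $\psi$ pinned down, I turn to $\varphi(t_0)$. It must centralize $\varphi(t_i)$ for $2\le i\le n-1$ and satisfy the braid relations with $\varphi(t_1)$ and $\varphi(t_n)$. In the ``inclusion'' case, computing the centralizer of the parabolic subgroup $A_{\{t_2,\dots,t_{n-1}\}}$ inside $A[\tilde A_n]$ and then imposing the two braid relations leaves only $\varphi(t_0)=t_0$, yielding case (2). In the ``transvection'' case, the centralizer condition in $A[B_{n+1}]$ forces $\varphi(t_0)\in A_Y[\tilde A_n]$, after which $\varphi(t_0)$ must commute with $t_2,\dots,t_{n-1}$ in $A[A_n]$ and braid with $t_1\Delta_Y^{2p}$ and $t_n\Delta_Y^{2p}$. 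Using the explicit description of parabolic centralizers in $A[A_n]$, the solution set reduces to finitely many candidates, and a direct check identifies exactly the two admissible solutions $v_0\Delta_Y^{2p}$ and $v_1\Delta_Y^{2p}$, giving cases (3) and (4) respectively.

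The main obstacle is the classification step for $\psi$. Extending Castel-type techniques from endomorphisms of $A[A_n]$ itself to homomorphisms targeting the larger group $A[\tilde A_n]$ (or, via Proposition~\ref{prop3_1}, $A[B_{n+1}]$) requires ruling out exotic braid-like configurations; the cleanest route is topological, identifying $A[\tilde A_n]$ and $A[B_{n+1}]$ with (finite-index subgroups of) mapping class groups of the $(n{+}2)$-punctured sphere and the $(n{+}1)$-punctured annulus, and analyzing the Thurston--Nielsen type of the images $\psi(t_i)$. A secondary difficulty, in Step 3, is verifying that $\alpha_p$ and $\beta_p$ really yield inequivalent endomorphisms --- so that no element of $\Aut^*(\tilde A_n)$ interchanges them --- which is what forces cases (3) and (4) to be listed separately.
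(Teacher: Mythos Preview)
Your overall strategy---restrict to $A_Y[\tilde A_n]\cong A[A_n]$, classify $\psi$, then solve for $\varphi(t_0)$---matches the paper's, but there is a genuine gap in the classification step. You invoke a classification of non-cyclic homomorphisms $A[A_n]\to A[B_{n+1}]$ ``in the spirit of'' \cite{Caste1,ChKoMa1,CasPar1}, but no such theorem is available in the literature for this target; those references treat $A[A_n]\to A[A_m]$ and $A[D_n]\to A[D_n]$. The paper's key manoeuvre is precisely to avoid this: it pushes the embedding one step further, $A[\tilde A_n]\subset A[B_{n+1}]\subset A[A_{n+1}]$, and applies the known classification (Theorem~\ref{thm3_3}) of homomorphisms $A[A_n]\to A[A_{n+1}]$. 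This yields $\varphi(t_i)=g\,s_i'^{\varepsilon}\Delta_Y^{2p}\Delta^{2q}g^{-1}$ with two transvection parameters $p,q$, not just the single $\Delta_Y^{2p}$ you list; the extra central twist $\Delta^{2q}$ is then killed by the observation that $\Delta[A_{n+1}]^2=\Delta[B_{n+1}]$ and the map $z\colon A[B_{n+1}]\to\Z$ vanishes on $A[\tilde A_n]$. Without routing through $A[A_{n+1}]$ you would have to reprove Theorem~\ref{thm3_3} for the $B$-type target from scratch.

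The second gap is in determining $\varphi(t_0)$. You propose to compute the centralizer of $A_{\{t_2,\dots,t_{n-1}\}}$ and impose the two braid relations, but this is not a routine parabolic-centralizer calculation: when $p\neq 0$, commutation with $t_i\Delta_Y^{2p}$ does not directly give commutation with $t_i$, and in the $p=0$ case the centralizer is large. The paper instead works in $\MM(\D,\PP_{n+2})$: since $t_0$ is conjugate to $t_1$, $\varphi(t_0)$ is a half-twist $H_{a'}$ times central twists, and the relations with the $\varphi(t_i)$ translate via Proposition~\ref{prop3_5} into intersection conditions on the arc $a'$ (disjoint from $a_2,\dots,a_{n-1}$, meeting $a_1,a_n$ in a single endpoint). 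For $p\neq 0$ the essential reduction system argument (Theorem~\ref{thm3_6}, Proposition~\ref{prop3_7}) forces $a'$ inside the disk bounded by $d$, leaving exactly $c_0,c_1$; for $p=0$ the arc $a'$ may enclose $p_{n+1}$, giving the two additional solutions $b_0,b_1$ (your claim that ``only $\varphi(t_0)=t_0$'' survives is too strong---the paper finds both $u_0$ and $u_1$). These topological arguments are the substance of Proposition~\ref{prop4_1} and are not replaceable by a short algebraic check.
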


The reader may wonder whether there is redundancy in Theorem~\ref{thm2_1}.
The answer is essentially no.
Indeed, it will be shown in Corollary~\ref{corl2_3} that neither $\alpha_p$ nor $\beta_p$ is surjective, hence we cannot have both Case~(2) and Case~(3) together, or both Case~(2) and Case~(4) together.
Neither $\alpha_p$ nor $\beta_p$ is cyclic, hence we cannot have both Case~(1) and Case~(3) together, or both Case~(1) and Case~(4) together.
Also, an automorphism of $A[\tilde A_n]$ is never cyclic, so we cannot have both Case~(1) and Case~(2) together.
It remains to understand Case~(3) and Case~(4).

Let $\Fix (Y)$ be the subgroup of $\Aut(A[\tilde A_n])$ of automorphisms of $A[\tilde A_n]$ which restrict to the identity on $A_Y [\tilde A_n]$. If $\psi \in \Fix (Y)$, then $\psi\circ\alpha_p=\alpha_p$ and $\psi\circ\beta_p=\beta_p$. We note that $\Fix(Y)$ is not trivial since, for example, it contains $\conj_{\Delta_Y}\circ\zeta\circ\eta$. In particular, we have $\conj_{\Delta_Y}\circ\zeta\circ\eta\circ\alpha_p=\alpha_p$, which means that $\alpha_p$ is conjugate to $(\zeta\circ\eta)\circ\alpha_p$ with $\zeta\circ\eta\in\Aut^*(\tilde A_n)$ (and the same reasoning applies for $\beta_p$). This shows that the automorphism $\psi$ in Case~(3) or in Case~(4) is not unique.
However, the number $p$ in Case~(3) or in Case~(4) is unique, and we cannot have both Case~(3) and Case~(4) together, as shown by the following proposition.

\begin{prop}\label{prop2_2}
Let $n\ge 4$.
\begin{itemize}
\item[(1)]
Let $\psi,\psi'\in\Aut(A[\tilde A_n])$ and $p,q\in\Z$.
If $\psi\circ\alpha_p=\psi'\circ\alpha_q$, then $p=q$.
\item[(2)]
Let $\psi,\psi'\in\Aut(A[\tilde A_n])$ and $p,q\in\Z$.
If $\psi\circ\beta_p=\psi'\circ\beta_q$, then $p=q$.
\item[(3)]
Let $\psi,\psi'\in\Aut(A[\tilde A_n])$ and $p,q\in\Z$.
Then $\psi\circ\alpha_p\neq\psi'\circ\beta_q$.
\end{itemize}
\end{prop}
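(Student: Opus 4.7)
The plan is to handle all three parts by a calculation on the abelianization of $A[\tilde A_n]$ and, for (3), to exploit the fact that $\alpha_p$ and $\beta_p$ have the same restriction to $A_Y$ while disagreeing on $t_0$.

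First I would note that $A[\tilde A_n]^{\mathrm{ab}}\cong\Z$ with every $t_i\mapsto 1$. Since $\Delta_Y$ is a positive word of length $n(n+1)/2$ in the $t_i$'s, and each of $v_0,v_1$ is a conjugate of $t_n$ (hence has exponent sum $1$), both $\alpha_p$ and $\beta_p$ induce multiplication by $1+pn(n+1)$ on $\Z$. Any $\psi\in\Aut(A[\tilde A_n])$ induces $\varepsilon_\psi\in\{\pm 1\}$ on the abelianization. Consequently, an equality of the form $\psi\circ\alpha_p=\psi'\circ\alpha_q$, $\psi\circ\beta_p=\psi'\circ\beta_q$, or $\psi\circ\alpha_p=\psi'\circ\beta_q$ forces $\varepsilon_\psi(1+pn(n+1))=\varepsilon_{\psi'}(1+qn(n+1))$; the case $\varepsilon_\psi=\varepsilon_{\psi'}$ yields $p=q$, while $\varepsilon_\psi=-\varepsilon_{\psi'}$ forces $(p+q)n(n+1)=-2$, which has no integer solution for $n\ge 2$. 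This proves~(1) and~(2) and reduces~(3) to the case $p=q$.

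For~(3) with $p=q$, assume $\psi\circ\alpha_p=\psi'\circ\beta_p$ and set $\chi=\psi^{-1}\circ\psi'\in\Aut(A[\tilde A_n])$, so that $\alpha_p=\chi\circ\beta_p$. Since $\Delta_Y^{2p}\in Z(A_Y)$, the assignment $t_i\mapsto t_i\Delta_Y^{2p}$ ($1\le i\le n$) extends to a homomorphism $\sigma_p\colon A_Y\to A_Y$, and one checks immediately that $\alpha_p|_{A_Y}=\beta_p|_{A_Y}=\sigma_p$. Using the centrality of $\Delta_Y^{2p}$ in $A_Y$ together with the fact that $v_0$ and $v_1$ have exponent sum $1$, a direct calculation gives $\sigma_p(v_j)=v_j\Delta_Y^{2p}$ for $j=0,1$; in particular $\alpha_p(t_0)=\sigma_p(v_0)$ and $\beta_p(t_0)=\sigma_p(v_1)$ both lie in $\sigma_p(A_Y)=\langle t_i\Delta_Y^{2p}:1\le i\le n\rangle$. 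Now the identities $\chi(t_i\Delta_Y^{2p})=\alpha_p(t_i)=t_i\Delta_Y^{2p}$ for $1\le i\le n$ show that $\chi$ fixes each generator of $\sigma_p(A_Y)$, hence the whole subgroup $\sigma_p(A_Y)$ pointwise. Applying this to $\beta_p(t_0)=\sigma_p(v_1)\in\sigma_p(A_Y)$ yields $\chi(\beta_p(t_0))=v_1\Delta_Y^{2p}$, while by hypothesis $\chi(\beta_p(t_0))=\alpha_p(t_0)=v_0\Delta_Y^{2p}$. Cancelling $\Delta_Y^{2p}$ forces $v_0=v_1$ in $A_Y$.

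The main obstacle left is therefore the verification that $v_0\ne v_1$ in $A_Y\simeq A[A_n]$. Writing $c=t_1\cdots t_{n-1}$ and $d=t_{n-1}\cdots t_1$, we have $v_0=ct_nc^{-1}$ and $v_1=d^{-1}t_nd$, so $v_0=v_1$ is equivalent to the element $dc=t_{n-1}\cdots t_2\,t_1^2\,t_2\cdots t_{n-1}\in A[A_{n-1}]$ commuting with $t_n$ in $A[A_n]$. This is readily ruled out by evaluating in a faithful linear representation of $A[A_n]$ (already in $SL_2(\Z)$ for the $n=2$ seed case, and by restriction more generally), and it is visually apparent from the distinct braid diagrams displayed in Figure~\ref{fig_v01}. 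The resulting contradiction completes the proof of~(3).
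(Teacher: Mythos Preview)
Your argument is correct and follows essentially the same route as the paper: use the abelianization $A[\tilde A_n]^{\mathrm{ab}}\cong\Z$ to force $p=q$ in all three parts, and then for (3) observe that the putative automorphism fixes every $t_i\Delta_Y^{2p}$, hence fixes the word $v_0\Delta_Y^{2p}$ built from them, forcing $v_0=v_1$.

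One small point worth noting: your treatment of the abelianization step is slightly cleaner than the paper's. The paper invokes Theorem~\ref{thm2_1} (equivalently Corollary~\ref{corl2_4}) to write $\psi=\conj_g\circ\psi_1$ with $\psi_1\in\Aut^*(\tilde A_n)$ and then read off $x(\psi(t_i))=\pm1$. You instead use only the general fact that an automorphism of a group induces an automorphism of its abelianization, and $\Aut(\Z)=\{\pm1\}$; this avoids any appeal to the main classification theorem and makes Proposition~\ref{prop2_2} logically independent of it. As for $v_0\neq v_1$, the paper simply asserts it; your reduction to ``$dc$ does not commute with $t_n$'' is fine, though the quickest justification in the paper's framework is that $v_0=H_{c_0}$ and $v_1=H_{c_1}$ are half-twists along the non-isotopic arcs of Figure~\ref{fig4_1}.
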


The proof of Proposition~\ref{prop2_2} will follow that of Theorem~\ref{thm2_1} in Section~\ref{sec4}.

We turn now to show some notable consequences of Theorem~\ref{thm2_1} before moving on to the preliminaries in Section~\ref{sec3} and to the proofs in Section~\ref{sec4}.

\begin{corl}\label{corl2_3}
Let $n\ge 4$.
Let $\varphi\colon A[\tilde A_n]\to A[\tilde A_n]$ be an endomorphism.
\begin{itemize}
\item[(1)]
$\varphi$ is injective if and only if $\varphi$ is conjugate to an element of $\Aut^*(\tilde A_n)$.
\item[(2)]
$\varphi$ is surjective if and only if $\varphi$ is conjugate to an element of $\Aut^*(\tilde A_n)$.
\end{itemize}
\end{corl}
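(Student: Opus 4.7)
The plan is to combine Theorem~\ref{thm2_1} with a short computation to exhibit a nontrivial element in the kernel of $\alpha_p$ and $\beta_p$. The $(\Leftarrow)$ direction in both parts is immediate: if $\varphi=\conj_g\circ\psi$ with $\psi\in\Aut^*(\tilde A_n)$, then $\varphi$ is a composition of two automorphisms, hence bijective.

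For the $(\Rightarrow)$ directions, I would apply Theorem~\ref{thm2_1} and rule out Cases~(1), (3), (4), observing that both injectivity and surjectivity are invariant under conjugation and under left-composition with an automorphism. Case~(1) is eliminated at once: a cyclic $\varphi$ factors through the abelianization $A[\tilde A_n]^{\mathrm{ab}}\cong\Z$ of the non-abelian group $A[\tilde A_n]$, so it is neither injective nor surjective.

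For Cases~(3) and~(4) it suffices to show that $\alpha_p$ and $\beta_p$ are neither injective nor surjective. Non-surjectivity is easy: $\Im(\alpha_p),\Im(\beta_p)\subset A_Y[\tilde A_n]$, which is a proper subgroup of $A[\tilde A_n]$, since $t_0\notin A_Y[\tilde A_n]$ (the corresponding fact $s_0\notin W_Y[\tilde A_n]$ being a standard property of standard parabolic subgroups of Coxeter systems). For non-injectivity, I would use the classical identity $\rho^{n+1}=\Delta_Y^2$ in $A_Y[\tilde A_n]\cong A[A_n]$ (the $h$-th power of the Coxeter element equals $\Delta^2$, with Coxeter number $h=n+1$). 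This element is central in $A_Y[\tilde A_n]$, so it commutes with $t_1,\ldots,t_n$; but since $Z(A[\tilde A_n])=\{1\}$ and $\rho^{n+1}\neq 1$, it cannot commute with $t_0$ in $A[\tilde A_n]$. Hence $[\rho^{n+1},t_0]\neq 1$. On the other hand, using the centrality of $\Delta_Y^2$ in $A_Y[\tilde A_n]$, one computes
\[
\alpha_p(\rho^{n+1})=(\rho\,\Delta_Y^{2pn})^{n+1}=\rho^{n+1}\Delta_Y^{2pn(n+1)}=\Delta_Y^{2(1+pn(n+1))}\in Z(A_Y[\tilde A_n]),
\]
while $\alpha_p(t_0)=v_0\Delta_Y^{2p}\in A_Y[\tilde A_n]$, so $\alpha_p([\rho^{n+1},t_0])=1$. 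The same calculation with $v_1$ in place of $v_0$ shows $[\rho^{n+1},t_0]\in\Ker(\beta_p)$.

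The main obstacle is spotting the right kernel element. Once one notices that $\rho^{n+1}$ becomes central under both $\alpha_p$ and $\beta_p$ (because it already lies in the center of $A_Y[\tilde A_n]$ and the images of these endomorphisms are contained in $A_Y[\tilde A_n]$), while $\rho^{n+1}$ itself fails to be central in $A[\tilde A_n]$, the rest is a one-line calculation that yields the desired nontrivial kernel element simultaneously for every $\alpha_p$ and every $\beta_p$.
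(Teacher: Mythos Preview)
Your argument is correct. The overall structure matches the paper's proof: invoke Theorem~\ref{thm2_1}, note that injectivity and surjectivity are preserved under conjugation and under composition with an automorphism, and then dispose of Cases~(1), (3), (4). The treatment of the cyclic case and of non-surjectivity in Cases~(3) and~(4) is essentially identical to the paper's.

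The one genuine difference is the choice of kernel element for $\alpha_p$ and $\beta_p$. The paper observes directly from the definitions that $\alpha_p(t_0)=v_0\Delta_Y^{2p}=\alpha_p(v_0)$, so $t_0v_0^{-1}$ is a nontrivial kernel element (nontriviality checked via the Coxeter quotient, since $\omega(t_0)\notin W_Y[\tilde A_n]$ while $\omega(v_0)\in W_Y[\tilde A_n]$); likewise $t_0v_1^{-1}\in\Ker(\beta_p)$. Your choice $[\Delta_Y^2,t_0]$ is different and arrived at more structurally: since $\Im(\alpha_p),\Im(\beta_p)\subset A_Y[\tilde A_n]$ and $\Delta_Y^2$ generates $Z(A_Y[\tilde A_n])$, any commutator $[\Delta_Y^2,t_0]$ must die, while triviality of $Z(A[\tilde A_n])$ forces it to be nontrivial in the domain. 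The paper's route is a touch shorter (no need for the identity $\rho^{n+1}=\Delta_Y^2$ or the center computation), but yours has the pleasant feature of producing a single kernel element that works uniformly for all $\alpha_p$ and $\beta_p$ without reference to $v_0$ or $v_1$.
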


\begin{proof}
Let $\varphi\colon A[\tilde A_n]\to A[\tilde A_n]$ be an endomorphism.
By Theorem~\ref{thm2_1} we have one of the following four possibilities up to conjugation.
\begin{itemize}
\item[(1)]
$\varphi$ is cyclic.
\item[(2)]
$\varphi\in\Aut^*(\tilde A_n)$.
\item[(3)]
There exist $p\in\Z$ and $\psi\in\Aut^*(\tilde A_n)$ such that $\varphi=\psi\circ\alpha_p$.
\item[(4)]
There exist $p\in\Z$ and $\psi\in\Aut^*(\tilde A_n)$ such that $\varphi=\psi\circ\beta_p$.
\end{itemize}
Clearly, if $\varphi\in\Aut^*(\tilde A_n)$, then $\varphi$ is both injective and surjective.
It remains to show that $\varphi$ is neither injective nor surjective in the other three cases.

Suppose $\varphi$ is cyclic.
Then $\varphi$ is not injective because $\varphi(t_1)=\varphi(t_2)$.
It is not surjective  either because $A[\tilde A_n]$ is not cyclic.

Let $p\in\Z$ and $\psi\in\Aut^*(\tilde A_n)$.
Recall that $v_0=t_1\ldots t_{n-1}t_nt_{n-1}^{-1}\ldots t_1^{-1}$
and $\omega\colon A[\tilde A_n]\to W[\tilde A_n]$ denotes the epimorphism which sends $t_i$ to $t_i$ for all $0\le i\le n$.

We claim that $\omega(t_0)\notin W_Y[\tilde A_n]$. Indeed, by~\cite[Chapter 4, Section 1.8, Corollary~1]{Bourb1}, the subgroup $W_Y[\tilde A_n]$ consists of all elements of $W[\tilde A_n]$ for which there is a reduced expression in generators $S$ involving only elements of $Y\subset S$. Since $\omega(t_0)\notin Y$, we see that $\omega(t_0)\notin W_Y[\tilde A_n]$.

On the other hand, $\omega(v_0)\in W_Y[\tilde A_n]$, which means that $t_0\ne v_0$. However, we have $\alpha_p(t_0)=\alpha_p(v_0)=v_0\Delta_Y^{2p}$, hence $\alpha_p$ is not injective, and therefore $\psi\circ\alpha_p$ is not injective.

Similarly, $\Im(\alpha_p)\subset A_Y[\tilde A_n]$ and $t_0\notin A_Y[\tilde A_n]$ imply that
$\alpha_p$ is not surjective, and therefore $\psi\circ\alpha_p$ is not surjective.

We prove in the same way that $\psi\circ\beta_p$ is neither injective nor surjective.
\end{proof}

A first consequence of Corollary~\ref{corl2_3} is the determination of the automorphism group and of the outer automorphism group of $A[\tilde A_n]$ for $n\ge 4$.
Note that this result is already known and proved in~\cite{ChaCri1} for $n\ge 2$.

\begin{corl}[Charney--Crisp~\cite{ChaCri1}]\label{corl2_4}
Let $n\ge 4$.
Then 
\[
\Aut(A[\tilde A_n])=\Inn(A[\tilde A_n])\rtimes\Aut^*(\tilde A_n)\simeq A[\tilde A_n]\rtimes (D_{2(n+1)}\times
\Z/2\Z)\,,
\]
and
\[
\Out(A[\tilde A_n])=\Aut^*(\tilde A_n)\simeq D_{2(n+1)}\times\Z/2\Z\,,
\]
where $D_{2(n+1)}$ denotes the dihedral group of order $2(n+1)$.\qed
\end{corl}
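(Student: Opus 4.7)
The plan is to read off the structure of $\Aut(A[\tilde A_n])$ directly from Corollary~\ref{corl2_3} together with the information already collected in the Remark preceding Theorem~\ref{thm2_1}. First I would let $\varphi\in\Aut(A[\tilde A_n])$ be arbitrary. Since an automorphism is both injective and surjective, Corollary~\ref{corl2_3} applies and gives an element $g\in A[\tilde A_n]$ together with $\psi\in\Aut^*(\tilde A_n)$ such that $\varphi=\conj_g\circ\psi$. This shows the product decomposition
\[
\Aut(A[\tilde A_n])=\Inn(A[\tilde A_n])\cdot\Aut^*(\tilde A_n)\,.
\]
Since $\Inn(A[\tilde A_n])$ is always a normal subgroup of $\Aut(A[\tilde A_n])$, this is already the correct raw ingredient for a semidirect product.

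Next I would invoke the Remark from Section~\ref{sec2}: because $A[\tilde A_n]$ embeds in $A[B_{n+1}]$ and the latter is torsion-free, and because the center of $A[\tilde A_n]$ is trivial, the group $\Inn(A[\tilde A_n])\simeq A[\tilde A_n]$ is torsion-free, while $\Aut^*(\tilde A_n)$ is finite. Therefore
\[
\Inn(A[\tilde A_n])\cap\Aut^*(\tilde A_n)=\{1\}\,.
\]
Combining this with the preceding product decomposition yields
\[
\Aut(A[\tilde A_n])=\Inn(A[\tilde A_n])\rtimes\Aut^*(\tilde A_n)\,,
\]
and since $Z(A[\tilde A_n])=\{1\}$ gives $\Inn(A[\tilde A_n])\simeq A[\tilde A_n]$, while $\Aut^*(\tilde A_n)\simeq \Aut(\tilde A_n)\times\Z/2\Z\simeq D_{2(n+1)}\times\Z/2\Z$ by its definition in Section~\ref{sec2}, the first displayed isomorphism of the corollary follows.

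Finally, for the outer automorphism group, I would simply quotient by $\Inn(A[\tilde A_n])$: the semidirect product decomposition immediately yields
\[
\Out(A[\tilde A_n])=\Aut(A[\tilde A_n])/\Inn(A[\tilde A_n])\simeq\Aut^*(\tilde A_n)\simeq D_{2(n+1)}\times\Z/2\Z\,.
\]
There is essentially no obstacle here: the entire content sits in Corollary~\ref{corl2_3}, whose proof in turn rests on Theorem~\ref{thm2_1}. The only points one must not forget are that $\Inn\cap\Aut^*=\{1\}$ (torsion argument via the Remark) and that $Z(A[\tilde A_n])=\{1\}$, so $\Inn(A[\tilde A_n])\simeq A[\tilde A_n]$. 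Everything else is a formal manipulation.
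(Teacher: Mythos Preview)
Your proposal is correct and is exactly the argument the paper has in mind: the corollary is stated with a bare \qed, being treated as an immediate consequence of Corollary~\ref{corl2_3} together with the Remark preceding Theorem~\ref{thm2_1}, and you have simply spelled out those two ingredients (product decomposition from Corollary~\ref{corl2_3}, trivial intersection from the torsion argument in the Remark).
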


Recall that a group $G$ is called \emph{Hopfian} if every surjective homomorphism $\psi\colon G\to G$ is injective, and it is called \emph{co-Hopfian} if every injective homomorphism $\psi\colon G\to G$ is surjective.
Another straightforward consequence of Corollary~\ref{corl2_3} is the following.

\begin{corl}\label{corl2_5}
Let $n\ge 4$.
Then $A[\tilde A_n]$ is Hopfian and co-Hopfian.\qed
\end{corl}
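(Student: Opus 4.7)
The plan is to derive both properties directly from Corollary~\ref{corl2_3}, which has already done all the substantive work. The key observation is that ``conjugate to an element of $\Aut^*(\tilde A_n)$'' means $\varphi=\conj_h\circ\psi$ for some $h\in A[\tilde A_n]$ and some $\psi\in\Aut^*(\tilde A_n)$; since both $\conj_h$ and $\psi$ are automorphisms of $A[\tilde A_n]$, such a $\varphi$ is itself an automorphism, hence simultaneously injective and surjective.

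For the Hopfian property, I would assume that $\varphi\colon A[\tilde A_n]\to A[\tilde A_n]$ is a surjective endomorphism. By Corollary~\ref{corl2_3}(2), $\varphi$ is conjugate to an element of $\Aut^*(\tilde A_n)$, so by the remark above $\varphi$ is an automorphism; in particular $\varphi$ is injective. For the co-Hopfian property, I would assume that $\varphi$ is injective, apply Corollary~\ref{corl2_3}(1) to conclude that $\varphi$ is conjugate to an element of $\Aut^*(\tilde A_n)$, and again deduce that $\varphi$ is an automorphism, hence surjective.

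There is essentially no obstacle: all the difficulty is concentrated in Theorem~\ref{thm2_1} and Corollary~\ref{corl2_3}, and the corollary at hand is a two-line formal consequence. The only mild point to check is that ``conjugate to'' (in the sense of composition with an inner automorphism on the left) preserves bijectivity, which is immediate since $\Inn(A[\tilde A_n])\subset\Aut(A[\tilde A_n])$.
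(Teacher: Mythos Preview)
Your proposal is correct and matches the paper's own treatment: the paper states Corollary~\ref{corl2_5} as an immediate consequence of Corollary~\ref{corl2_3} and gives no further proof, exactly because the argument you wrote is the obvious two-line deduction.
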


This result is also known. That $A[\tilde A_n]$ is co-Hopfian was established in~\cite[Corollary~3]{BelMar2} for $n\ge2$, and that $A[\tilde A_n]$ is Hopfian follows from the fact that it is a finitely generated residually finite group (as a subgroup of a linear group $A[B_{n+1}]$, see Section~\ref{sec3}).

\begin{rem}
The proofs of Corollaries~\ref{corl2_4}, \ref{corl2_5} given in~\cite{ChaCri1} and \cite{BelMar2}, respectively, rely on works of Ivanov~\cite{Ivano1} and Korkmaz~\cite{Korkm1} on the automorphism group of the mapping class group of a surface, and these techniques cannot be used to determine the whole set of endomorphisms of $A[\tilde A_n]$.
Our proof of Theorem~\ref{thm2_1} uses other tools, notably the study of the homomorphisms from $A[A_n]$ to $A[A_{n+1}]$ made in~\cite{Caste1} and~\cite{ChKoMa1}.
\end{rem}


\section{Preliminaries}\label{sec3}

\subsection{Key embeddings}

The first ingredient in the proof of Theorem~\ref{thm2_1} is the following well-known chain of embeddings
\[
A[A_n] \lhook\joinrel\longrightarrow A[\tilde A_n] \lhook\joinrel\longrightarrow A[B_{n+1}] \lhook\joinrel\longrightarrow A[A_{n+1}]\,,
\]
which is described as follows.

The first embedding $\iota_Y\colon A[A_n]\lhook\joinrel\longrightarrow A[\tilde A_n]$ is defined by $\iota_Y(s_i) = t_i$ for all $1 \le i \le n$.

Now, we describe the second embedding, $\iota_{\tilde A}\colon A[\tilde A_n] \lhook\joinrel\longrightarrow A[B_{n+1}]$.
Let $\rho_B = r_1 \ldots r_n r_{n+1} \in A[B_{n+1}]$.
A direct calculation shows that $\rho_B r_i \rho_B^{-1} = r_{i+1}$ for all $1\le i\le n-1$ and $\rho_B^2  r_n \rho_B^{-2} = r_1$.
So if we set $r_0=\rho_B r_n\rho_B^{-1}$, then $\rho_Br_i\rho_B^{-1}=r_{i+1}$ for all $0\le i\le n-1$ and $\rho_Br_n\rho_B^{-1}=r_0$. (We note, however, that $r_{n+1}$ and $r_0$ are two different elements of $A[B_{n+1}]$, since their images in the abelianization of $A[B_{n+1}]$ are distinct.)
On the other hand, recall the automorphism $\zeta \in \Aut(\tilde A_n)$ which sends $t_i$ to $t_{i+1}$ for all $0 \le i \le n$.
We consider the action of an infinite cyclic group $\langle u \rangle \simeq \Z$ on $A[\tilde A_n]$ defined by $u \cdot h = \zeta(h)$ for all $h \in A[\tilde A_n]$, and we consider the semi-direct product $A[\tilde A_n] \rtimes \langle u \rangle$ defined by this action. 
The following is proved in~\cite{KenPei1} (see also~\cite{ChaCri1}).

\begin{prop}[Kent--Peifer~\cite{KenPei1}]\label{prop3_1}
There exists an isomorphism $\gamma\colon A[\tilde A_n] \rtimes \langle u \rangle \to A[B_{n+1}]$ which sends $t_i$ to $r_i$ for all $0\le i \le n$ and sends $u$ to $\rho_B$.\qed
\end{prop}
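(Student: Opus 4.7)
The plan is to define $\gamma$ directly on generators, verify that it extends to a well-defined homomorphism, and then construct an explicit two-sided inverse. First, set $\gamma(t_i) = r_i$ for $0 \le i \le n$ and $\gamma(u) = \rho_B$. The defining relations of the semidirect product $A[\tilde A_n] \rtimes \langle u \rangle$ split into two families: the Artin relations of type $\tilde A_n$ among $t_0,\dots,t_n$, and the conjugation relations $u t_i u^{-1} = t_{i+1 \bmod (n+1)}$. The latter translate to the identities $\rho_B r_i \rho_B^{-1} = r_{i+1 \bmod (n+1)}$ already recorded in the paragraph preceding the proposition. For the Artin relations, those involving only $r_1,\dots,r_n$ hold since these generators span a standard parabolic subgroup of $A[B_{n+1}]$ of type $A_n$. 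The remaining relations involve $r_0 = \rho_B r_n \rho_B^{-1}$, and each is a $\rho_B$-conjugate of an $A_n$ relation already established: conjugating $r_{n-1} r_n r_{n-1} = r_n r_{n-1} r_n$ by $\rho_B$ yields $r_n r_0 r_n = r_0 r_n r_0$, conjugating by $\rho_B^2$ yields $r_0 r_1 r_0 = r_1 r_0 r_1$, and the commutations $r_0 r_j = r_j r_0$ for $2 \le j \le n-1$ arise similarly from the $A_n$ commutations $r_n r_{j-1} = r_{j-1} r_n$.

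To show $\gamma$ is an isomorphism, I would exhibit an inverse $\gamma'\colon A[B_{n+1}] \to A[\tilde A_n] \rtimes \langle u \rangle$ by setting $\gamma'(r_i) = t_i$ for $1 \le i \le n$ and $\gamma'(r_{n+1}) = v$, where $v := (t_1 t_2 \cdots t_n)^{-1} u$. Well-definedness requires verifying the defining relations of $A[B_{n+1}]$. The $A_n$ relations among $t_1,\dots,t_n$ are immediate; the commutations of $v$ with $t_1,\dots,t_{n-1}$ reduce, after pushing $u$ past each $t_j$ via $u t_j u^{-1} = t_{j+1}$, to the identity $\rho t_j \rho^{-1} = t_{j+1}$ recorded in the text (where $\rho = t_1 \cdots t_n$). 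The delicate relation is the length-four braid identity $v t_n v t_n = t_n v t_n v$: one expands both sides by moving every $u$ to the right, reducing to an equality of elements of $A[\tilde A_n]$ multiplied by $u^2$, which then follows from the $\tilde A_n$ braid relations linking $t_0$ with its neighbors $t_1$ and $t_n$. Finally, $\gamma \circ \gamma' = \id$ holds on generators since $\gamma(v) = (r_1 \cdots r_n)^{-1} \rho_B = r_{n+1}$, and $\gamma' \circ \gamma = \id$ holds since $\gamma'(\rho_B) = (t_1 \cdots t_n) v = u$.

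The main obstacle is the length-four braid identity for $v$ and $t_n$ needed in the construction of $\gamma'$. This is the only step where the generators of types $B$ and $\tilde A$ interact non-trivially, and it is equivalent, via $\gamma$, to the identity $r_n r_0 r_n = r_0 r_n r_0$ already established above by $\rho_B$-conjugation. Every other step is either an immediate translation or a formal conjugation argument.
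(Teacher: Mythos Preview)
The paper does not give its own proof of this proposition: it is stated with a terminal \qed and attributed to Kent--Peifer, so there is nothing to compare your argument against. Your overall strategy---define $\gamma$ on generators, check the semidirect-product relations via the identities $\rho_B r_i \rho_B^{-1}=r_{i+1}$, then build an explicit inverse $\gamma'$---is the standard one and is correct.

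There is one genuine soft spot, in your treatment of the length-four relation $v t_n v t_n = t_n v t_n v$. Your final remark that this relation ``is equivalent, via $\gamma$, to the identity $r_n r_0 r_n = r_0 r_n r_0$'' is circular: applying $\gamma$ to a candidate relation in the domain only tells you its image holds, and you cannot deduce the relation itself without already knowing $\gamma$ is injective, which is precisely what the construction of $\gamma'$ is meant to establish. (Also, under $\gamma$ the relation becomes the defining $B_{n+1}$ relation $r_n r_{n+1} r_n r_{n+1}=r_{n+1}r_n r_{n+1}r_n$, not the length-three relation you name.) You therefore really do need the direct computation. Carrying it out, with $\rho=t_1\cdots t_n$ one finds
\[
(v t_n)^2=\rho^{-1}t_0\,(t_2\cdots t_n t_0)^{-1}\,t_1\,u^2,\qquad
(t_n v)^2=t_n\,\rho^{-1}t_0\,(t_2\cdots t_n t_0)^{-1}\,u^2,
\]
and the two occurrences of $t_0$ cancel in each expression. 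The required identity thus reduces to
\[
(t_2\cdots t_n)\,\rho \;=\; \rho\,(t_1\cdots t_{n-1}),
\]
which is an $A_n$ relation (no $t_0$ involved) and follows immediately from $\rho^{-1}t_j\rho=t_{j-1}$ for $2\le j\le n$. So your diagnosis that the crux ``follows from the $\tilde A_n$ braid relations linking $t_0$ with its neighbours'' is off: after simplification the verification takes place entirely inside $A_Y[\tilde A_n]\simeq A[A_n]$. With this correction your argument goes through.
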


The embedding $\iota_{\tilde A}\colon A[\tilde A_n] \lhook\joinrel\longrightarrow A[B_{n+1}]$ is defined to be the restriction of $\gamma$ to $A[\tilde A_n]$.
From now on we identify $A[\tilde A_n]$ with the image of $\iota_{\tilde A}$.
So, $A[\tilde A_n]$ is viewed as the subgroup of $A[B_{n+1}]$ generated by $r_0, r_1, \dots, r_n$, and we have $t_i = r_i$ for all $0 \le i \le n$ via this identification.

\begin{rem}
We have a homomorphism $z\colon A[B_{n+1}] \to \Z$ defined by $z(r_i) = 0$ for all $1 \le i \le n$ and $z(r_{n+1}) = 1$.
As $z(\rho_B) = 1$, we have $\Ker(z) = \Im(\iota_{\tilde A}) = A[\tilde A_n]$.
\end{rem}

In what follows it will be useful to differentiate the generators of $A[A_n]$ from those of $A[A_{n+1}]$, hence we will denote by $s_1', \dots, s_n', s_{n+1}'$ the standard generators of $A[A_{n+1}]$.
The following proposition has been observed independently by several authors (see~\cite{Lambr1,Crisp2}, for example) 
and it is implicitly proved in~\cite{Bries1}. A simple combinatorial proof of it is given in~\cite[Proposition~1]{Manfr1}.

\begin{prop}\label{prop3_2}
There exists an injective homomorphism $\iota_B\colon A[B_{n+1}] \lhook\joinrel\longrightarrow A[A_{n+1}]$ which sends $r_i$ to $s_i'$ for all $1\le i\le n$ and sends $r_{n+1}$ to $(s_{n+1}')^2$.\qed
\end{prop}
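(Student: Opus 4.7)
The plan is to proceed in two stages: first verify that the prescription on generators extends to a well-defined homomorphism, and then prove injectivity. All defining relations of $A[B_{n+1}]$ that do not involve $r_{n+1}$ follow automatically from the corresponding relations among $s_1',\ldots,s_n'$ in $A[A_{n+1}]$, and the commutations $r_i r_{n+1} = r_{n+1} r_i$ for $i \le n-1$ reduce to the fact that $s_i'$ commutes with $s_{n+1}'$, hence with its square. The only genuine computation is the length-$4$ relation $r_n r_{n+1} r_n r_{n+1} = r_{n+1} r_n r_{n+1} r_n$. Writing $a = s_n'$ and $b = s_{n+1}'$ so that $aba = bab$, the identity $ab^2 a b^2 = b^2 a b^2 a$ can be checked by
\[
ab^2 ab^2 = ab(bab)b = ab(aba)b = (ab)^3, \qquad b^2 a b^2 a = b(bab)ba = b(aba)ba = (ba)^3,
\]
combined with $(ab)^3 = (aba)(bab) = (bab)(aba) = (ba)^3$; both expressions coincide with the square of the common value $aba = bab$.

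For the injectivity step, the most transparent route is geometric. I would identify $A[A_{n+1}]$ with the braid group on $n+2$ strands, i.e.\ with the mapping class group $\MM$ of a disk $D$ with $n+2$ marked points relative to $\partial D$. A computation of induced permutations shows that the image of $\iota_B$ lies in the index-$(n+2)$ subgroup $\MM'\subset\MM$ of mapping classes fixing the last marked point: $\iota_B(r_i) = s_i'$ realizes the adjacent transposition of the points $i,i+1$ for $i \le n$, while $\iota_B(r_{n+1}) = (s_{n+1}')^2$ is a full twist between the last two strands and acts trivially on the marked points. Via the Birman exact sequence associated with forgetting this fixed puncture (equivalently, viewing $\MM'$ as the mapping class group of an annulus with $n+1$ interior marked points), $\MM'$ is isomorphic to $A[B_{n+1}]$, and the standard Artin generators under this isomorphism coincide with $s_1',\ldots,s_n'$ and $(s_{n+1}')^2$; hence $\iota_B$ realizes this isomorphism, and in particular is injective.

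The main obstacle is precisely the last identification: establishing rigorously that $\MM'$ admits an Artin presentation of type $B_{n+1}$ with the asserted generators. This can be handled via Birman's treatment of strand-fixing braid subgroups, or via Brieskorn's realization of $A[B_{n+1}]$ as the deck transformation group of a suitable double cover of the type-$A_{n+1}$ hyperplane complement as in~\cite{Bries1}, which yields injectivity almost directly. Alternatively, bypassing all the geometry, one can simply follow the elementary combinatorial argument of~\cite[Proposition~1]{Manfr1}, which is the approach I would use if the goal is a self-contained and short exposition.
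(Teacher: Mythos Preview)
The paper does not actually prove this proposition: it is stated with a terminal \qed and attributed to the literature (Lambropoulou, Crisp, Brieskorn, with Manfredini cited for a short combinatorial proof). So there is no ``paper's own proof'' to compare against, only the references.

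Your argument is correct. The verification of well-definedness is clean, and the braid-relation computation $ab^2ab^2 = (ab)^3 = (ba)^3 = b^2ab^2a$ is exactly the right check. Your injectivity argument via the mapping class group of the disk, identifying the image with the stabilizer of one puncture and invoking the annular/Birman description to recognize it as $A[B_{n+1}]$, is one of the standard routes and is essentially the content behind the Lambropoulou and Crisp references. The paper itself later uses the same viewpoint (see Remark~\ref{rem4} and the discussion in the proof of Theorem~\ref{thm2_1}, where $A[B_{n+1}]$ is identified with $\omega^{-1}(W_Y[A_{n+1}])$, i.e.\ the subgroup of $\MM(\D,\PP_{n+2})$ fixing $p_{n+1}$). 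Your closing remark that for a self-contained write-up one should simply follow \cite[Proposition~1]{Manfr1} matches the paper's own recommendation verbatim.
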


From now on we identify $A[B_{n+1}]$ with the image of $\iota_B$.
So, $A[B_{n+1}]$ is viewed as the subgroup of $A[A_{n+1}]$ generated by $s_1',\dots,s_n',(s_{n+1}')^2$, and we have $r_i=s_i'$ for all $1\le i\le n$ and $r_{n+1}=(s_{n+1}')^2$ via this identification.

\begin{rem}\label{rem4}
Recall that $\omega\colon A[A_{n+1}] \to W[A_{n+1}]$ denotes the epimorphism which sends $s_i'$ to $s_i'$ for all $1 \le i\le n+1$. Then $A[B_{n+1}]=\omega^{-1}(W_Y[A_{n+1}])$, where $Y=\{s_1', \dots, s_n'\}$. This is proved in~\cite[Proposition~1]{Manfr1}, but also can be seen directly as follows. Observe that $A[B_{n+1}]$ maps onto $W_Y[A_{n+1}]$ under $\omega$ (since $\omega(s'^2_{n+1})=1$), and that $A[B_{n+1}]$ contains $\Ker\,\omega$. Indeed, $A[B_{n+1}]$ contains all normal generators $s'^2_1,\dots,s'^2_{n+1}$ of $\Ker\,\omega$ and all their conjugates by the generators $(s'_i)^{\pm1}$ of $A[A_{n+1}]$. This is clear for $(s'_i)^{\pm1}(s'_j)^2(s'_i)^{\mp1}$ if $(i,j)\ne(n+1,n)$. For the remaining case we see that: $s'_{n+1}s'^2_{n}(s'_{n+1})^{-1}=(s'_{n+1}s'_{n}(s'_{n+1})^{-1})^2=((s'_{n})^{-1}s'_{n+1}s'_{n})^2=(s'_{n})^{-1}s'^2_{n+1}s'_{n}\in A[B_{n+1}]$, and similarly for the conjugation by $s'^{-1}_{n+1}$. Thus, $A[B_{n+1}]$ contains $\Ker\,\omega$, which finishes the proof.
\end{rem}

The second ingredient in the proof of Theorem~\ref{thm2_1} is the following.

\begin{thm}[Castel~\protect{\cite[Th\'eor\`eme\,4\,(iii)]{Caste1}}, Chen--Kordek--Margalit~\protect{\cite[Corollary~1.2]{ChKoMa1}}, Orevkov~\protect{\cite[Theorem~1.6]{Orevk1}}]\label{thm3_3}
Let $n\ge 4$.
Let $\varphi\colon A[A_n]\to A[A_{n+1}]$ be a homomorphism.
Then we have one of the following two possibilities up to conjugation.
\begin{itemize}
\item[(1)]
$\varphi$ is cyclic.
\item[(2)]
There exist $\varepsilon\in\{\pm 1\}$ and $p,q\in\Z$ such that $\varphi(s_i)=s_i'^\varepsilon \Delta_Y^{2p}\Delta^{2q}$, where $Y=\{s_1',\dots,s_n'\}$, $\Delta_Y=\Delta_Y[A_{n+1}]$ and $\Delta=\Delta[A_{n+1}]$.\qed
\end{itemize}
\end{thm}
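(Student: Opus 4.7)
The plan is to follow the totally-symmetric-sets strategy of Chen--Kordek--Margalit, which is the most self-contained of the three cited approaches. Identify $A[A_{n+1}]$ with the mapping class group of a disk with $n+2$ marked points, so that each $s_i'$ becomes the half-twist about an arc $a_i$ joining the $i$-th and $(i+1)$-th point, and write $x_i=\varphi(s_i)$. The $x_i$ satisfy the braid relations, and the task is to recognise them, up to conjugation, either as all equal (Case~(1)) or as half-twists about the standard arcs, twisted by a central factor (Case~(2)).

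The first key step is to exploit total symmetry. For $n\ge 4$, the subsets $T_{\mathrm{odd}}=\{s_1,s_3,\dots\}$ and $T_{\mathrm{even}}=\{s_2,s_4,\dots\}$ of $A[A_n]$ each contain at least two pairwise commuting generators, any permutation of which is realised by simultaneous conjugation in $A[A_n]$; this is the definition of a \emph{totally symmetric set}. Their $\varphi$-images are then totally symmetric in $A[A_{n+1}]$, and the Chen--Kordek--Margalit classification of such sets in braid groups forces each image to be either a singleton (all elements equal) or a configuration of half-twists about pairwise disjoint arcs, up to central multiples. The singleton case for either $T_{\mathrm{odd}}$ or $T_{\mathrm{even}}$, combined with the braid relations between consecutive $x_i$'s, quickly collapses $\varphi$ to a cyclic homomorphism, yielding Case~(1).

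In the remaining case one may, after a single conjugation, write $x_i=\tau_{c_i}^{\varepsilon_i}z_i$, where $\tau_{c_i}$ is the half-twist about an arc $c_i$ and $z_i$ lies in an appropriate centraliser. The commutation relations $x_ix_j=x_jx_i$ for $|i-j|\ge 2$ force $c_i,c_j$ to be disjoint, while the braid relations $x_ix_{i+1}x_i=x_{i+1}x_ix_{i+1}$ force consecutive arcs $c_i,c_{i+1}$ to share exactly one endpoint. A combinatorial analysis of such arc chains in a punctured disk shows that, up to a mapping class (i.e.\ up to conjugation), the only $n$-chain is the standard one $c_i=a_i$, and that all signs $\varepsilon_i$ must coincide to a single $\varepsilon\in\{\pm1\}$. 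Pinning down the central corrections $z_i$ via the braid and commutation relations then leaves exactly the two-parameter freedom $\Delta_Y^{2p}\Delta^{2q}$ coming from the centres of $A_Y[A_{n+1}]$ and $A[A_{n+1}]$, which is the form asserted in Case~(2).

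The main obstacle is the geometric rigidity in the second step: showing that any chain of arcs realising the prescribed disjointness and one-intersection pattern is conjugate into the standard chain $a_1,\dots,a_n$. This is the technical heart of the result and requires genuine surface-topological input --- either Nielsen--Thurston theory and the Birman exact sequence in Castel's approach, or the totally-symmetric-set classification together with the small-homomorphism rigidity lemmas of Chen--Kordek--Margalit. The hypothesis $n\ge 4$ enters precisely here: it guarantees that $T_{\mathrm{odd}}$ and $T_{\mathrm{even}}$ each have enough elements to pin down the arc system up to conjugation.
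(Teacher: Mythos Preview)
The paper does not prove this theorem: it is stated with a \qed\ and used as a black-box input, attributed to Castel, Chen--Kordek--Margalit, and Orevkov. So there is no ``paper's own proof'' to compare against; your sketch is already more than the paper offers.

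As a summary of the Chen--Kordek--Margalit strategy, your outline is broadly correct in spirit but compresses several nontrivial steps into assertions. A few points where the sketch is thinner than it should be: (i) the classification of totally symmetric sets in $A[A_{n+1}]$ is not simply ``singleton or half-twists about disjoint arcs up to central multiples''---the actual dichotomy involves more structure (canonical reduction systems, Nielsen--Thurston type of a common power, multitwist analysis), and establishing it is most of the work in \cite{ChKoMa1}; (ii) the passage from ``$c_i$ and $c_{i+1}$ share exactly one endpoint'' to the chain being standard up to conjugation is the change-of-coordinates principle, which is fine, but you also need to argue that the chain uses exactly $n+1$ of the $n+2$ punctures and misses one---this is where the two-parameter central freedom $\Delta_Y^{2p}\Delta^{2q}$ (rather than just $\Delta^{2q}$) comes from, and your sketch does not explain why $\Delta_Y$ appears; (iii) the remark that $n\ge 4$ guarantees each of $T_{\mathrm{odd}}$, $T_{\mathrm{even}}$ has ``enough elements'' is imprecise: for $n=4$ each has only two elements, and a two-element totally symmetric set carries little information, so the CKM argument at the boundary case $n=4$ requires additional care (indeed Castel's original argument needed $n\ge 5$).

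None of this is fatal as a proof \emph{plan}, but as written it is an outline rather than a proof, and the hardest step---the structure theorem for totally symmetric sets in braid groups---is invoked rather than argued. Since the paper itself treats Theorem~\ref{thm3_3} as a citation, this level of detail is arguably appropriate, but you should be explicit that you are summarising an external argument rather than supplying one.
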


Our strategy for proving Theorem~\ref{thm2_1} will be now as follows. 
Firstly, we use Theorem~\ref{thm3_3} to determine the set of homomorphisms from $A[\tilde A_n]$ to $A[A_{n+1}]$ (see Proposition~\ref{prop4_1}).
Secondly, we determine which of these homomorphisms have an image contained in $A[\tilde A_n]\subset A[A_{n+1}]$.

\subsection{Mapping class groups}

To pass from homomorphisms from $A[A_n]$ to $A[A_{n+1}]$ to homomorphisms from $A[\tilde A_n]$ to $A[A_{n+1}]$, we use a third ingredient: the mapping class groups.
So, we give below the information on these groups that we will need, and we refer to~\cite{FarMar1} for a complete exposition on the subject.

Let $\Sigma$ be a compact oriented surface with or without boundary, and let $\PP$ be a finite family of marked points in the interior of $\Sigma$.
We denote by $\Homeo^+(\Sigma,\PP)$ the group of homeomorphisms of $\Sigma$ which preserve the orientation, which are the identity on a neighborhood of the boundary, and which leave set-wise invariant the set $\PP$.
The \emph{mapping class group} of the pair $(\Sigma,\PP)$, denoted by $\MM(\Sigma,\PP)$, is the group of isotopy classes of elements of $\Homeo^+(\Sigma,\PP)$, where isotopies are required to leave the set $\partial \Sigma \cup \PP$ point-wise invariant. 
If $\PP=\varnothing$, then we write $\Homeo^+(\Sigma)=\Homeo^+(\Sigma,\varnothing)$ and $\MM(\Sigma)=\MM(\Sigma,\varnothing)$.

A \emph{circle} of $(\Sigma,\PP)$ is the image of an embedding $a\colon \S^1 \hookrightarrow \Sigma \setminus (\partial\Sigma \cup \PP)$.
It is called \emph{generic} if it does not bound any disk containing $0$ or $1$ marked point, and if it is not parallel to any boundary component.
The isotopy class of a circle $a$ is denoted by $[a]$. We emphasize that isotopies are considered in $\Sigma \setminus (\partial\Sigma \cup \PP)$, i.e.\ circles are not allowed to pass through points of $\partial\Sigma \cup\PP$ under isotopies. 
We denote by $\CC(\Sigma,\PP)$ the set of isotopy classes of generic circles.
The \emph{intersection index} of two classes $[a],[b]\in\CC(\Sigma,\PP)$ is $i([a],[b])=\min\{|a'\cap b'|\mid a'\in[a]\text{ and }b'\in[b]\}$.
The set $\CC(\Sigma,\PP)$ is endowed with a structure of simplicial complex, where a non-empty finite subset $\FF\subset\CC(\Sigma,\PP)$ is a simplex if $i([a],[b])=0$ for all $[a],[b]\in\FF$.
This complex is called the \emph{curve complex} of $(\Sigma,\PP)$.

In the present paper the (right) Dehn twist along a circle $a$ is denoted by $T_a$.
The following is important to make the link between Artin groups and mapping class groups.
Its proof can be found in~\cite[Section 3.5]{FarMar1}.

\begin{prop}\label{prop3_4}
Let $\Sigma$ be an oriented compact surface and let $\PP$ be a finite collection of marked points in the interior of $\Sigma$.
Let $a,b$ be two generic circles of $(\Sigma,\PP)$ which we assume to be in minimal position, that is, such that $i([a],[b])=|a\cap b|$.
Then for the Dehn twists $T_a$, $T_b$ we have:
\begin{itemize}
\item[(1)]
$T_aT_b=T_bT_a$, if $a\cap b=\varnothing$.
\item[(2)]
$T_aT_bT_a=T_bT_aT_b$, if $|a\cap b|=1$.
\item[(3)]
The set $\{T_a,T_b\}$ generates a rank~$2$ free group, if $|a\cap b|\ge 2$.\qed
\end{itemize}
\end{prop}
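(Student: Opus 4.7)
The plan is to handle the three parts in order, moving from elementary to progressively more geometric input, with the main effort concentrated on part~(3).

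For part~(1), since $a\cap b=\varnothing$ I would choose disjoint closed annular neighborhoods $N_a\supset a$ and $N_b\supset b$, and represent $T_a$ (resp.\ $T_b$) by a homeomorphism supported in $N_a$ (resp.\ $N_b$). Two homeomorphisms with disjoint supports commute as self-maps of $\Sigma$, so their isotopy classes commute in $\MM(\Sigma,\PP)$.

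For part~(2), I would use a change-of-coordinates argument. A regular neighborhood $N$ of $a\cup b$ in $\Sigma$ deformation retracts to the graph $a\cup b$, which has one vertex and two edges, hence $\chi(N)=-1$; since $N$ is a compact connected surface with exactly one boundary component, it is a one-holed torus, with $a,b$ playing the role of its two standard generating simple closed curves. As $T_a,T_b$ are supported in $N$, it suffices to verify the braid relation in $\MM(N)$. One exhibits an explicit isotopy in $N$ from $T_aT_b(a)$ to $b$ (this is the standard picture in the one-holed torus, and can be checked at the level of $H_1$ where $T_a,T_b$ act as the usual generators of $SL_2(\Z)$). Combined with the change-of-coordinates identity $\phi T_c\phi^{-1}=T_{\phi(c)}$ for orientation-preserving mapping classes, this yields $(T_aT_b)T_a(T_aT_b)^{-1}=T_b$, which rearranges to $T_aT_bT_a=T_bT_aT_b$.

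Part~(3) is the main obstacle, and I would attack it by a ping-pong argument driven by the Thurston intersection estimate for twists: for all $[c],[d]\in\CC(\Sigma,\PP)$ and all $n\in\Z$,
\[
\bigl|\,i(T_a^n[c],[d]) - |n|\cdot i([a],[c])\cdot i([a],[d])\,\bigr|\ \le\ i([c],[d])\,,
\]
together with its analogue for $b$. Setting $\kappa=i([a],[b])\ge 2$, I would partition a suitable collection of classes into two disjoint subsets $X_a,X_b\subset\CC(\Sigma,\PP)$, defined by an explicit comparison between $i([c],[a])$ and $i([c],[b])$, and verify by applying the estimate with $[d]=[a]$ or $[d]=[b]$ that $T_a^n(X_b)\subset X_a$ and $T_b^n(X_a)\subset X_b$ for every $n\ne 0$. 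The ping-pong lemma then forces $\langle T_a,T_b\rangle$ to be free of rank~$2$. The delicate step, and the reason the hypothesis $\kappa\ge 2$ is essential, is the calibration of $X_a,X_b$: when $\kappa\ge 2$ the leading term $|n|\kappa\,i([a],[c])$ dominates the error $i([c],[d])$ comfortably and flips $[c]$ between the two sets under a single nontrivial twist, whereas for $\kappa=1$ the two terms compete on equal footing and the argument collapses, consistently with the braid relation of part~(2).
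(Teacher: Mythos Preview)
Your proposal is correct and follows precisely the route the paper has in mind: the paper does not prove this proposition but simply cites \cite[Section~3.5]{FarMar1}, and the arguments you sketch---disjoint supports for~(1), the one-holed torus neighborhood plus the change-of-coordinates identity $T_{\phi(c)}=\phi T_c\phi^{-1}$ for~(2), and the Ishida--Hamidi-Tehrani ping-pong via the Thurston intersection inequality for~(3)---are exactly the ones found there. One small remark: in part~(2) your parenthetical about $H_1$ and $SL_2(\Z)$ is only a heuristic (the action on homology has kernel), so the actual proof must rest on the explicit isotopy $T_aT_b(a)\simeq b$ that you mention, which is indeed the argument in Farb--Margalit.
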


An \emph{arc} of $(\Sigma,\PP)$ is the image of an embedding $a\colon [0,1]\hookrightarrow\Sigma\setminus\partial\Sigma$ such that $a([0,1])\cap\PP=\{a(0),a(1)\}$, with $a(0)\ne a(1)$. We consider arcs up to isotopies under which interior points of arcs are mapped into $\Sigma\setminus(\partial\Sigma\cup\PP)$.
We denote the isotopy class of an arc $a$ by $[a]$, and the set of isotopy classes of arcs by $\AA(\Sigma,\PP)$.
The \emph{intersection index} of two classes $[a],[b]\in\AA(\Sigma,\PP)$ is $i([a],[b])=\min\{|a'\cap b'|\mid a'\in[a]\text{ and }b'\in[b]\}$.

With an arc $a$ of $(\Sigma,\PP)$ we associate an element $H_a\in\MM(\Sigma,\PP)$, called the \emph{(right) half-twist} along $a$.
This element is the identity outside a regular neighborhood of $a$, it exchanges the two ends of $a$, and $H_a^2=T_{\hat a}$, where $\hat a$ is the boundary of a regular neighborhood of $a$, see Figure~\ref{fig:halftwist}. We refer the reader to~\cite[Section~1.6.2]{KasTur1} for more information about properties of half-twists.

\begin{figure}[htb!]
\begin{center}
\begin{tikzpicture}[scale=0.8]
\draw[thick] (0,0) ellipse [x radius=2, y radius=1];
\draw[dotted, very thick, blue] (-2,0)--(-1,0);
\draw[dotted, very thick, blue] (1,0)--(2,0);
\draw[->-=0.55,thick, blue] (-1,0)--(1,0);
\draw (0,0.3) node {\footnotesize$a$};
\draw (2,0.7) node {\footnotesize$\hat a$};
\fill (-1,0) circle (2pt);
\fill (1,0) circle (2pt);

\begin{scope}[xshift=7cm]
\draw[thick] (0,0) ellipse [x radius=2, y radius=1];
\draw[dotted, very thick, blue] (2,0) to [out=135,in=45,looseness=0.75] (-1,0); 
\draw[dotted, very thick, blue] (-2,0) to [out=-45,in=-135,looseness=0.75] (1,0);
\draw[->-=0.55,thick, blue] (1,0)--(-1,0);
\fill (-1,0) circle (2pt);
\fill (1,0) circle (2pt);
\end{scope}

\draw[->] (2.75,0) to (4.25	,0); 
\draw (3.5,0.3) node {$H_a$};

\end{tikzpicture}
\end{center}
\caption{A half-twist\label{fig:halftwist}}
\end{figure}

The following is known to experts but, as far as we know, the proof is not readily available in the literature, so we provide it here.

\begin{prop}\label{prop3_5}
Let $\Sigma$ be an oriented compact surface and let $\PP$ be a finite collection of marked points in the interior of $\Sigma$. If $|\PP|=3$, or if $|\PP|=4$ and the genus of $\Sigma$ is zero, we require additionally that $\partial\Sigma\ne\varnothing$. Let $a,b$ be two non-isotopic arcs of $(\Sigma,\PP)$ which we assume to be in minimal position, that is, such that $i([a],[b])=|a\cap b|$. Then for the half-twists $H_a$ and $H_b$ we have:
\begin{itemize}
\item[(1)]
$H_aH_b=H_bH_a$, if $a\cap b=\varnothing$.
\item[(2)]
$H_aH_bH_a=H_bH_aH_b$, if $|a\cap b|=1$ and $a\cap b\subset\PP$.
\item[(3)]
The set $\{H_a,H_b\}$ generates a rank~$2$ free group, if $|a\cap b|\ge 2$, or if $|a\cap b|=1$ and $a\cap b\not\subset\PP$.
\end{itemize}
\end{prop}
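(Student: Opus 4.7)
The plan is to treat each case by reducing to the analogous assertion for Dehn twists in Proposition~\ref{prop3_4}. Case~(1) is immediate: if $a\cap b=\varnothing$ then $a$ and $b$ admit disjoint regular neighborhoods, so $H_a$ and $H_b$ have disjoint supports and commute.

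For case~(2), suppose $|a\cap b|=1$ and the unique intersection point is a puncture $p$. Since $a$ and $b$ are non-isotopic and in minimal position, their remaining endpoints $q$ and $r$ must be distinct from each other and from $p$: if $q=r$ then both $p$ and $q$ would lie in $a\cap b$, contradicting $|a\cap b|=1$. A closed regular neighborhood of $a\cup b$ is then a disk $D$ with $D\cap\PP=\{p,q,r\}$, and inside $\MM(D,D\cap\PP)\cong B_3$ the half-twists $H_a,H_b$ are standard generators $\sigma_1,\sigma_2$ satisfying the braid relation. Pushing this relation forward through the inclusion-induced homomorphism $\MM(D,D\cap\PP)\to\MM(\Sigma,\PP)$ yields $H_aH_bH_a=H_bH_aH_b$.

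For case~(3) the natural tool is the Birman--Hilden theorem (see~\cite[Section~9.4]{FarMar1}). One constructs a branched double cover $\pi\colon\tilde\Sigma\to\Sigma$ with branch locus $\PP$ and covering involution $\tau$. Each arc $a$ with endpoints in $\PP$ lifts to a simple closed curve $\tilde a=\pi^{-1}(a)$, and Birman--Hilden yields an isomorphism $\MM(\Sigma,\PP)\cong\mathrm{SMod}(\tilde\Sigma)/\langle\tau\rangle$ under which $H_a$ corresponds to the class of the Dehn twist $T_{\tilde a}$. Each interior transverse crossing of $a$ with $b$ lifts to two transverse crossings of $\tilde a$ with $\tilde b$, while each crossing at a puncture contributes a single crossing at the corresponding branch point; consequently in every subcase of~(3) we have $|\tilde a\cap\tilde b|\ge 2$ in minimal position. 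Proposition~\ref{prop3_4}(3) then gives that $\langle T_{\tilde a},T_{\tilde b}\rangle$ is free of rank~$2$, and since this subgroup is torsion-free it meets $\langle\tau\rangle$ trivially. The quotient map is therefore injective on it, and $\langle H_a,H_b\rangle$ is free of rank~$2$ as well.

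The main obstacle is the construction of the branched double cover in case~(3) when $\Sigma$ is closed and $|\PP|$ is odd, since a $\Z/2$-branched cover with branch locus exactly $\PP$ requires the sum of puncture loops to vanish in the $\Z/2$-abelianization of $\pi_1(\Sigma\setminus\PP)$, which fails precisely in that setting. The workaround is to restrict to a subsurface $\Sigma'\subset\Sigma$ that is a regular neighborhood of $a\cup b$ equipped with its boundary; this $\Sigma'$ has non-empty boundary, so the cover exists over it, and the inclusion $\MM(\Sigma',\PP\cap\Sigma')\to\MM(\Sigma,\PP)$ is injective on $\langle H_a,H_b\rangle$ provided the boundary circles of $\Sigma'$ are essential in $(\Sigma,\PP)$. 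The hypothesis excluding $(|\PP|=3,\partial\Sigma=\varnothing)$ rules out the thrice-punctured sphere, whose mapping class group is finite and in which assertion~(3) would genuinely fail.
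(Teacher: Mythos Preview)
Your treatment of cases~(1) and~(2) is more direct than the paper's: the paper runs all three cases uniformly through the Birman--Hilden double cover and Proposition~\ref{prop3_4}, whereas you dispose of (1) by disjoint supports and (2) by working in a three-punctured disk neighborhood modeling $B_3$. These shortcuts are correct and genuinely simpler.

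For case~(3) both arguments invoke Birman--Hilden, but the constructions diverge. The paper branches over an \emph{even} subset $\PP'\subseteq\PP$: when $|\PP|$ is odd it deletes a puncture lying off $a\cup b$, which is always possible unless $|\PP|=3$ and $|a\cap b\cap\PP|=1$; that exceptional case is handled by gluing a punctured annulus along a boundary component (hence the hypothesis $\partial\Sigma\neq\varnothing$). You instead branch over all of $\PP$, and when $\Sigma$ is closed with $|\PP|$ odd you propose restricting to a regular neighborhood $\Sigma'$ of $a\cup b$.

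This workaround has a real gap. You claim injectivity of $\MM(\Sigma',\PP\cap\Sigma')\to\MM(\Sigma,\PP)$ on $\langle H_a,H_b\rangle$ ``provided the boundary circles of $\Sigma'$ are essential,'' but you neither verify that hypothesis nor observe that it can fail: if $\Sigma$ is a $5$-punctured sphere and $a,b$ meet once at an interior point, then $\Sigma'$ is a $4$-punctured disk whose boundary bounds a once-punctured disk on the outside, hence is inessential, and the kernel contains the central element $T_{\partial\Sigma'}=\Delta_{\Sigma'}^2$. The repair is available---the kernel is generated by boundary twists, hence central in $\MM(\Sigma',\PP\cap\Sigma')$, while the free group $\langle H_a,H_b\rangle$ has trivial center---but you do not supply it. You also need to check that the lifts $\tilde a,\tilde b$ are generic and remain in minimal position in $\tilde\Sigma'$, which is not automatic for a small neighborhood. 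Finally, your closing sentence misreads the hypothesis: the condition $|\PP|=3\Rightarrow\partial\Sigma\neq\varnothing$ excludes every closed surface with three punctures, not only the sphere, and for higher-genus examples case~(3) is not vacuous.
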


Proposition~\ref{prop3_5} will follow from the Birman--Hilden theory combined with Proposition~\ref{prop3_4}.
So, we begin by recalling the results of Birman--Hilden~\cite{BirHil1} that we need and we refer to~\cite{MarWin1} or~\cite[Chapter 9]{FarMar1} for more details.

As above, let $\Sigma$ be a compact oriented surface with or without boundary, and let $\PP$ be a finite family of marked points in the interior of $\Sigma$. 
Let $p\colon \tilde\Sigma\to\Sigma$ be a two-sheeted branched covering whose set of branch points is exactly $\PP$, and let $\tilde\PP=p^{-1}(\PP)$. In what follows we assume that the Euler characteristic of $\tilde\Sigma$ is negative.

Let $\sigma\colon\tilde \Sigma \to \tilde \Sigma$ be the non-trivial deck transformation of $p$. 
Note that $\sigma$ is not necessarily an element of $\Homeo^+ (\tilde \Sigma, \tilde \PP)$ in the sense that it does not necessarily restrict to the identity on a neighborhood of the boundary of $\tilde \Sigma$. 
In fact, we have $\sigma \in \Homeo^+ (\tilde \Sigma, \tilde \PP)$ if and only if $\partial \Sigma = \varnothing$. 
Let $\SHomeo^+ (\tilde \Sigma, \tilde \PP)$ denote the subgroup of $\Homeo^+  (\tilde \Sigma, \tilde \PP)$ consisting of the elements $\tilde f \in \Homeo^+ (\tilde \Sigma, \tilde \PP)$ which commute with $\sigma$, and let $\SS\MM (\tilde \Sigma, \tilde \PP)$ denote the subgroup of $\MM (\tilde \Sigma)$ consisting of the isotopy classes of elements of $\SHomeo^+ (\tilde \Sigma, \tilde \PP)$. We remark that the isotopies in the definition of $\MM(\tilde\Sigma)$ are not required to preserve the set $\tilde\PP$.

Let $f \in \Homeo^+ (\Sigma, \PP)$. 
We say that $f$ is \emph{liftable} if there exists $\tilde f \in \Homeo^+ (\tilde \Sigma, \tilde \PP)$ such that $p \circ \tilde f = f \circ p$. 
In this case $\tilde f$ is an element of $\SHomeo^+ ( \tilde \Sigma, \tilde \PP)$. 
Note that, if $\partial \tilde \Sigma \neq \varnothing$, then $\tilde f$ is unique, and if $\partial \tilde \Sigma = \varnothing$, then $f$ has exactly two lifts, $\tilde f$ and $\tilde f \circ \sigma = \sigma \circ \tilde f$.  
We denote by $\LHomeo^+ (\Sigma, \PP)$ the subgroup of liftable elements of $\Homeo^+ (\Sigma, \PP)$, and we denote by $\LL \MM (\Sigma, \PP)$ the subgroup of $\MM (\Sigma, \PP)$ consisting of the isotopy classes of elements of $\LHomeo^+ (\Sigma, \PP)$.

We have a surjective homomorphism $\Phi\colon\SHomeo^+ (\tilde \Sigma, \tilde \PP) \to \LHomeo^+ (\Sigma, \PP)$ defined as follows.
Let $\tilde f \in \SHomeo^+ (\tilde \Sigma, \tilde \PP)$.
Then $\Phi (\tilde f)$ is the unique homeomorphism $f \in \LHomeo^+ (\Sigma, \PP)$ satisfying $p \circ \tilde f = f \circ p$. 
We have $\Ker (\Phi) = G$, where $G = \langle \sigma \rangle = \{ \id, \sigma \}$ if $\partial \Sigma = \varnothing$, and $G = \{ \id \}$ if $\partial \Sigma \neq \varnothing$.
Since the degree two covers are regular, and the Euler characteristic of $\tilde\Sigma$ is negative, we can apply results of \cite[Theorem 2]{BirHil1} and \cite[Section 3]{MarWin1} and conclude that $\Phi$ induces a surjective homomorphism $\Phi^*\colon\SS \MM (\tilde \Sigma, \tilde \PP) \to \LL \MM (\Sigma, \PP)$ whose kernel is $G'$, where $G' = \langle [ \sigma ] \rangle = \{ \id, [\sigma]\} \simeq \Z/ 2 \Z$ if $\partial \Sigma = \varnothing$, and $G' = \{ \id \}$ if $\partial \Sigma \neq \varnothing$.

\begin{proof}[Proof of Proposition~\ref{prop3_5}]
We would like to engineer a surface $\tilde\Sigma$ and a degree two branched cover $\tilde\Sigma\to \Sigma$ such that the set of branched points is exactly $\PP$. This is not always possible, e.g.\ in the case when the cardinality of $\PP$ is odd and the surface $\Sigma$ has no boundary. So we first show how to modify the surface $\Sigma$ in order to reduce the problem to the case of even number of marked points, and then how to ensure that the Euler characteristic of $\tilde\Sigma$ is negative.

\emph{Step 1: Reduction to the case of even number of marked points.} Since arcs $a,b$ are assumed to exist in $(\Sigma,\PP)$, the cardinality of $\PP$ is at least $2$. Suppose that $\card\PP$ is odd. 

Assume first that there exist a marked point $x\in \PP\setminus(a\cup b)$. Consider a surface $\Sigma'$ which is obtained from $\Sigma$ by removing the point $x$ and ``blowing up'' the resulting hole into a boundary circle $c_x$. Thus, there exist a quotient map of surfaces $q\colon \Sigma'\to\Sigma$, which sends the boundary circle $c_x\subset\Sigma'$ to the point $x\in\Sigma$. Let $a'$, $b'$ be the arcs $a$, $b$, viewed on the surface $\Sigma'$. Clearly, $a'$ and $b'$ lie entirely in the interior of $\Sigma'$ and they are also in the minimal position in $\Sigma'$. We notice that the subgroup $\langle H_a, H_b\rangle$ lies in the subgroup $\MM(\Sigma, x)$ of mapping classes that fix the marked point $x$. And it is known (see~\cite[Proposition~3.19]{FarMar1}) that the quotient map $q$ induces the ``capping epimorphism'' $q^*\colon\MM(\Sigma')\to\MM(\Sigma,x)$ with the kernel generated by the boundary Dehn twist $T_{c_x}$. In particular, $q^*(H_{a'})=H_a$, $q^*(H_{b'})=H_b$, and $H_a$, $H_b$ satisfy relations $H_aH_b=H_bH_a$ or $H_aH_bH_a=H_bH_aH_b$, if elements $H_{a'}$, $H_{b'}$ satisfy these relations in $\MM(\Sigma')$. Moreover, since the boundary twist $T_{c_x}$ is central in $\MM(\Sigma')$, the elements $H_a$, $H_b$ generate a rank 2 free subgroup in $\MM(\Sigma,x)$, if $H_{a'}$, $H_{b'}$ generate a rank 2 free subgroup in $\MM(\Sigma')$. This allows us to consider the surface $(\Sigma',\PP')$ instead of $(\Sigma,\PP)$, where $\PP'=\PP\setminus \{x\}$ has even cardinality.

Now we consider the case when $\card\PP$ is odd but there is no marked point $x\in\PP\setminus(a\cup b)$. Then $\card\PP=3$, and by the assumption of the proposition, $\Sigma$ has a non-empty boundary.
Let $c$ be a boundary component of $\Sigma$. 
We glue an annulus $A$ to $c$ and we take a new marked point $x$ inside this annulus. 
We denote by $\Sigma'$ the surface obtained in this way and we set $\PP' = \PP \cup \{x\}$. 
Then $a$ and $b$ are still in minimal position in $(\Sigma', \PP')$ and the embedding of $\Sigma$ into $\Sigma'$ induces an injective homomorphism $\MM (\Sigma, \PP) \lhook\joinrel\longrightarrow \MM (\Sigma', \PP')$, see~\cite[Theorem~3.18]{FarMar1}. In particular, all the conclusions concerning $H_a$, $H_b$ in parts (1), (2), (3) of the proposition are satisfied in $\MM(\Sigma,\PP)$ if and only if they are satisfied in $\MM(\Sigma',\PP')$. Therefore, we can assume without loss of generality, that $\card\PP$ is even.

\emph{Step 2: Ensuring that the Euler characteristic of $\tilde\Sigma$ is negative.} The Birman--Hilden theorem cited above applies only to the covering surfaces of negative Euler characteristic. In the construction of Step~3 below, the genus of the resulting covering surface $\tilde\Sigma$ will be equal to $2\genus(\Sigma)+\frac12\card\PP-1$. Thus, to ensure that the Euler characteristic of $\tilde\Sigma$ is negative, it is sufficient to modify the surface $\Sigma$ in such a way that $\genus(\Sigma)\ge1$ if $\card\PP\le4$.

Suppose that $\genus(\Sigma)=0$. Since, by the assumption, at least two non-isotopic arcs exist in $(\Sigma,\PP)$, we see that if $\card\PP=2$, then $\Sigma$ is not a closed sphere or a disk. In particular, $\partial\Sigma\ne\varnothing$. If $\card\PP=4$ then, by the assumption of the proposition, again, $\partial\Sigma\ne\varnothing$. So let $c$ be a boundary component of $\Sigma$. Consider a compact oriented surface $S$ of genus $1$ with one boundary component (a torus with one disk removed). We glue the surface $S$ to $c$ and denote by $\Sigma'$ the resulting surface. Again, we notice that $a$ and $b$ are still in minimal position in $(\Sigma', \PP)$ and the embedding of $\Sigma$ into $\Sigma'$ induces an injective homomorphism $\MM (\Sigma, \PP) \lhook\joinrel\longrightarrow \MM (\Sigma', \PP)$, by~\cite[Theorem~3.18]{FarMar1}. Therefore, we can assume without loss of generality, that $\genus(\Sigma)\ge1$ if $\card\PP\le4$.

\emph{Step 3: Construction of the cover $\tilde\Sigma$.}
Let $p\colon \tilde\Sigma\to\Sigma$ be the two-sheeted branched covering whose set of branch points is precisely $\PP$.
To describe $p$ explicitly, we choose an embedded disk $D\subset\Sigma$ such that $\PP$ lies in the interior of $D$ and denote $\Sigma_0=(\Sigma\setminus D)\cup \partial D$. Then $\tilde\Sigma$ is built as the union of two copies of $\Sigma_0$ attached to a surface $\Sigma_D$ which has genus $\frac12\card\PP-1$ and two boundary components. The branched covering $p$ restricted to $\Sigma_D$ is the quotient map $\Sigma_D\to D$ by the action of the hyperelliptic involution on $\Sigma_D$, and $p$ sends each copy of $\Sigma_0\subset \tilde \Sigma$ homeomorphically onto $\Sigma_0\subset \Sigma$. Notice that $\genus(\tilde\Sigma)=2\genus(\Sigma)+\frac12\card\PP-1$.

Set $\tilde \PP= p^{-1}(\PP)$ and let $\Phi\colon\SHomeo^+ (\tilde \Sigma, \tilde \PP) \to \LHomeo^+ (\Sigma, \PP)$ and $\Phi^*\colon\SS \MM (\tilde \Sigma, \tilde \PP) \to \LL \MM (\Sigma, \PP)$ be the surjective homomorphisms defined as described before the proof of the proposition.

\emph{Step 4: Relating half-twists in $(\Sigma,\PP)$ to Dehn twists in $\tilde\Sigma$.}
We observe that the full preimage $p^{-1}(a)$ is a circle $\tilde a$ in $\tilde \Sigma$. Since $\genus(\Sigma)\ge1$ or $\card\PP\ge6$, we see from the construction in Step~3 that $\tilde a$ does not bound a disk in $\tilde \Sigma$, and neither is $\tilde a$ parallel to a boundary component of $\tilde\Sigma$. We conclude that $\tilde a$ is a generic circle in $\tilde \Sigma$, with $H_a \in \LL \MM (\Sigma, \PP)$, $T_{\tilde a} \in \SS \MM (\tilde \Sigma, \tilde \PP)$, and $\Phi_* (T_{\tilde a}) = H_a$ (see~\cite[Section~9.4.1]{FarMar1}).
Similarly, $\tilde b = p^{-1} (b)$ is a generic circle with $H_b \in \LL \MM (\Sigma, \PP)$, $T_{\tilde b} \in \SS \MM (\tilde \Sigma, \tilde \PP)$, and $\Phi_* (T_{\tilde b}) = H_b$.

Arguing as in the proof of~\cite[Lemma~9.3]{FarMar1}, we see that the circles $\tilde a=p^{-1}(a)$ and $\tilde b=p^{-1}(b)$ are in minimal position. If $a\cap b=\varnothing$, then $\tilde a\cap\tilde b=\varnothing$, hence, by Proposition~\ref{prop3_4}, $T_{\tilde a}T_{\tilde b}=T_{\tilde b}T_{\tilde a}$, and therefore, by applying $\Phi_*$ we get $H_aH_b=H_bH_a$.
If $|a\cap b|=1$ and $a\cap b\subset\PP$, then $|\tilde a\cap\tilde b|=1$, hence, by Proposition~\ref{prop3_4}, $T_{\tilde a}T_{\tilde b}T_{\tilde a}=T_{\tilde b}T_{\tilde a}T_{\tilde b}$, and therefore, by applying $\Phi_*$ we get $H_aH_bH_a=H_bH_aH_b$.
Suppose that $|a\cap b|\ge2$, or $|a\cap b|=1$ and $a\cap b\not\subset\PP$.
Then clearly $|\tilde a\cap\tilde b|\ge 2$, and since $\tilde a$ and $\tilde b$ are in minimal position, $i([\tilde a], [\tilde b])\ge 2$. Hence, by Proposition~\ref{prop3_4}, $\{T_{\tilde a},T_{\tilde b}\}$ generates a rank~$2$ free group.
Note that $\langle T_{\tilde a},T_{\tilde b}\rangle\cap G'=\{\id\}$, because $\langle T_{\tilde a},T_{\tilde b}\rangle$ is torsion-free and $G'$ is finite.
Since $G'=\Ker(\Phi_*)$, it follows that the restriction of $\Phi_*$ to $\langle T_{\tilde a},T_{\tilde b}\rangle$ is injective, hence $\{\Phi_*(T_{\tilde a}),\Phi_*(T_{\tilde b})\}=\{H_a,H_b\}$ generates a rank~$2$ free group.
\end{proof}

\begin{rem}
\begin{enumerate}
\item In the proof of Proposition~\ref{prop3_5}, if one wishes to use the original version of the Birman--Hilden theorem for closed surfaces, one can ensure that $\tilde\Sigma$ has no boundary by gluing in a torus with one boundary component to every boundary component of $\Sigma$, in the way as it is done in Step 2 of the proof for the case $\genus(\Sigma)=0$ and $\card\PP\le4$.
\item 
We believe that the conclusions of Proposition~\ref{prop3_5} 
are true in the special cases $|\PP|=3$, and $|\PP|=4$ with $\genus(\Sigma)=0$, even without the assumption $\partial\Sigma\ne\varnothing$.
\end{enumerate}
\end{rem}


We record an important corollary, which is an analog for half-twists of the corresponding fact about Dehn twists, stating that the two Dehn twists uniquely determine their respective circles up to isotopy (see~\cite[Fact~3.6]{FarMar1}).
\begin{corl}\label{cor3_6}
Let $(\Sigma,\PP)$ be as in Proposition~\ref{prop3_5}. If $a$ and $b$ are two non-isotopic arcs in $(\Sigma,\PP)$, then their respective half-twists $H_a$ and $H_b$ are different elements of $\MM(\Sigma,\PP)$.
\end{corl}
\begin{proof}
Assume that $H_a=H_b$ in $\MM(\Sigma,\PP)$. 
Observe that $H_a$ interchanges points $a(0)$ and $a(1)$ and fixes $\PP\setminus\{a(0),a(1)\}$ pointwise, and, similarly, $H_b$ interchanges points $b(0)$, $b(1)$ and fixes $\PP\setminus\{b(0),b(1)\}$ pointwise. Since isotopies in the definition of $\MM(\Sigma,\PP)$ are required to fix the set $\PP$ pointwise, we conclude that the two sets are equal: $\{a(0),a(1)\}=\{b(0),b(1)\}$. But then, by part (3) of Proposition~\ref{prop3_5}, $H_a$ and $H_b$ generate a rank $2$ free group in $\MM(\Sigma,\PP)$, which is a contradiction with the assumption $H_a=H_b$. 
\end{proof}

We denote by $\D$ the standard disk, and we choose a collection $\PP_{n+2}=\{p_0,p_1,\dots, p_n,p_{n+1}\}$ of $n+2$ marked points in the interior of $\D$ (see Figure~\ref{fig3_1}).
Let $a_1,\dots,a_n,a_{n+1}$ be the arcs drawn in Figure~\ref{fig3_1}.
Then, by Proposition~\ref{prop3_5}, we have a homomorphism $\Psi\colon A[A_{n+1}]\to\MM(\D,\PP_{n+2})$ which sends $s_i'$ to $H_{a_i}$ for all $1\le i\le n+1$.
By~\cite[Theorem~1.33]{KasTur1}, this homomorphism is an isomorphism.
From now on we will identify $A[A_{n+1}]$ with $\MM(\D,\PP_{n+2})$ via $\Psi$.
In particular, we assume that $s_i'=H_{a_i}$ for all $1\le i\le n+1$.
We claim that, via this identification, we have $t_0=H_{b_0}$, where $b_0$ is the arc drawn in Figure~\ref{fig3_1}. Indeed, recall that $t_0\in A[\tilde A_n]$ is identified under the embedding $\iota_{\tilde A}$ with the element $r_0=\rho_Br_n\rho_B^{-1}$ of $A[B_{n+1}]$, where $\rho_B=r_1\ldots r_nr_{n+1}$. In its turn, under the embedding $\iota_B$, elements $r_n$ and $\rho_B$ get identified with $s_n'$ and $s_1's_2'\ldots s_n'(s_{n+1}')^2$, respectively. Hence, the image of $t_0$ in $\MM(\D,\PP_{n+2})$ under $\Psi$ is $\Psi(\rho_B)H_{a_n}\Psi(\rho_B)^{-1}=H_{\Psi(\rho_B)(a_n)}$. One easily checks by drawing pictures that $\Psi(\rho_B)(a_n)=H_{a_1}H_{a_2}\ldots H_{a_n}H_{a_{n+1}}^2(a_n)=b_0$, up to isotopy.

\begin{figure}[ht!]
\begin{center}
\includegraphics[width=4cm]{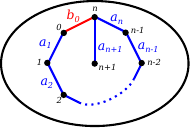}
\caption{A disk with marked points}\label{fig3_1}
\end{center}
\end{figure}

\subsection{Reduction systems of curves}

Let $\Sigma$ be an oriented compact surface, and let $\PP$ be a finite collection of marked points in the interior of $\Sigma$.
Assume the Euler characteristic of $\Sigma\setminus\PP$ is negative.
Let $f\in\MM(\Sigma,\PP)$.
We say that a simplex $\FF$ of $\CC(\Sigma,\PP)$ is a \emph{reduction system} for $f$ if $f(\FF)=\FF$.
In this case any element of $\FF$ is called a \emph{reduction class} for $f$.
A reduction class $[a]$ is an \emph{essential reduction class} if, for each $[b]\in\CC(\Sigma,\PP)$ such that $i([a],[b])\neq 0$ and for each $m\in\Z\setminus\{0\}$, we have $f^m([b])\neq [b]$.
In particular, if $[a]$ is an essential reduction class and $[b]$ is any reduction class, then $i([a],[b])=0$.
We denote by $\SS(f)$ the set of essential reduction classes for $f$.
The following gathers together some results on $\SS(f)$ that we will use in the proofs.

\begin{thm}[Birman--Lubotzky--McCarthy~\cite{BiLuMc1}]\label{thm3_6}
Let $\Sigma$ be an oriented compact surface, and let $\PP$ be a finite collection of marked points in the interior of $\Sigma$.
Assume the Euler characteristic of $\Sigma\setminus\PP$ is negative.
Let $f\in\MM(\Sigma,\PP)$.
\begin{itemize}
\item[(1)]
If $\SS(f)\neq\varnothing$, then $\SS(f)$ is a reduction system for $f$.
\item[(2)]
We have $\SS(f^n)=\SS(f)$ for all $n\in\Z\setminus\{0\}$.
\item[(3)]
We have $\SS(gfg^{-1})=g(\SS(f))$ for all $g\in\MM(\Sigma,\PP)$.\qed
\end{itemize}
\end{thm}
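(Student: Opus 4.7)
The plan is to prove the three statements in the order (3), (1), (2), relying primarily on the definitions and the observation, already recorded just before the theorem statement, that an essential reduction class has zero intersection number with any reduction class.

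Part (3) is pure functoriality. Any $g\in\MM(\Sigma,\PP)$ induces a simplicial bijection of $\CC(\Sigma,\PP)$ preserving intersection indices. I would verify both inclusions directly: if $[a]\in\SS(f)$ lies in an $f$-invariant simplex $\FF$, then $g(\FF)$ is $(gfg^{-1})$-invariant and contains $g([a])$, so $g([a])$ is a reduction class for $gfg^{-1}$; and for essentiality, the equivalence $(gfg^{-1})^m([b])=[b]\Leftrightarrow f^m(g^{-1}[b])=g^{-1}[b]$, together with $i(g[a],[b])=i([a],g^{-1}[b])$, transports the essentiality condition from $[a]$ to $g([a])$. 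The reverse inclusion is the same argument with $g^{-1}$ and $gfg^{-1}$ substituted for $g$ and $f$.

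Part (1) splits into two claims. That $\SS(f)$ is a simplex of $\CC(\Sigma,\PP)$ is immediate from the in-particular clause preceding the theorem: any two essential reduction classes $[a],[b]$ satisfy $i([a],[b])=0$ because each is itself a reduction class. That $f(\SS(f))=\SS(f)$ follows by showing $f([a])\in\SS(f)$ whenever $[a]\in\SS(f)$: the reduction-class property transports trivially, since an $f$-invariant simplex containing $[a]$ also contains $f([a])$; and essentiality is preserved by the substitution $[c]=f^{-1}([b])$, since $i(f[a],[b])=i([a],[c])\neq 0$ and the relation $f^m([b])=[b]$ forces $f^m([c])=[c]$, contradicting essentiality of $[a]$ tested against $[c]$.

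Part (2) is the real content. The inclusion $\SS(f)\subseteq\SS(f^n)$ is easy: by part~(1) the simplex $\SS(f)$ is $f$-invariant, hence $f^n$-invariant, so each $[a]\in\SS(f)$ is a reduction class for $f^n$; and the essentiality condition for $f^n$ asks for $f^{nm}[b]\neq[b]$, a special case of the condition for $f$. The reverse $\SS(f^n)\subseteq\SS(f)$ is the delicate direction, since one must upgrade $f^n$-invariance of some simplex through $[a]$ to $f$-invariance of some (possibly different) simplex through $[a]$. The standard route, which I would follow, is via the Nielsen--Thurston machinery: one identifies $\SS(f)$ with the set of isotopy classes appearing in the canonical reduction system of $f$, and verifies that the canonical decomposition of $\Sigma\setminus\PP$ determined by $f$ agrees with that determined by any nonzero power $f^n$, because the Nielsen--Thurston type of each piece is preserved under passage to $n$-th powers. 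The main obstacle is this foundational input (existence, uniqueness, and power-invariance of the canonical reduction system, as developed in~\cite{BiLuMc1}); once it is in hand, the equality $\SS(f)=\SS(f^n)$ follows immediately.
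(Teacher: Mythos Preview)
The paper does not prove this theorem: it is stated as a result quoted from \cite{BiLuMc1}, with the \qed\ symbol placed immediately after the statement and no argument given. So there is no ``paper's own proof'' to compare your proposal against.

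That said, your sketch is essentially a correct account of the elementary parts. Part~(3) is indeed pure functoriality, and once it is established you could have shortened Part~(1): setting $g=f$ in~(3) gives $f(\SS(f))=\SS(f)$ for free, so only the simplex claim remains (and that follows from the observation preceding the theorem, together with the bound on the number of disjoint curve classes on a finite-type surface, which guarantees finiteness). Your treatment of the easy inclusion $\SS(f)\subseteq\SS(f^n)$ in~(2) is fine. For the hard inclusion you correctly identify that one needs the canonical reduction system machinery developed in \cite{BiLuMc1}; since that is precisely the reference the paper is invoking, your proposal and the paper ultimately rest on the same source.
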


In addition to Theorem~\ref{thm3_6} we have the following which is well-known and which is a direct consequence of~\cite{BiLuMc1}.

\begin{prop}\label{prop3_7}
Let $\Sigma$ be an oriented compact surface, and let $\PP$ be a finite collection of marked points in the interior of $\Sigma$.
Assume the Euler characteristic of $\Sigma\setminus\PP$ is negative.
Let $f_0\in Z(\MM(\Sigma,\PP))$ be a central element of $\MM(\Sigma,\PP)$, let $\FF=\{[a_1],[a_2],\dots,[a_p]\}$ be a simplex of $\CC(\Sigma,\PP)$, and let $k_1,k_2,\dots,k_p$ be non-zero integers.
Let $g=T_{a_1}^{k_1}T_{a_2}^{k_2}\ldots T_{a_p}^{k_p}f_0$.
Then $\SS(g)=\FF$.\qed
\end{prop}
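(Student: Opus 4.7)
The plan is to prove the two inclusions $\FF\subseteq\SS(g)$ and $\SS(g)\subseteq\FF$ separately, using as the key preliminary fact that any central element $f_0 \in Z(\MM(\Sigma,\PP))$ acts trivially on $\CC(\Sigma,\PP)$. Indeed, centrality gives $T_{f_0(c)}=f_0\,T_c\,f_0^{-1}=T_c$ for every generic circle $c$, and since distinct isotopy classes of generic circles produce distinct Dehn twists, $f_0([c])=[c]$. Combined with the fact that the $T_{a_i}$'s pairwise commute and each $T_{a_i}$ fixes every $[a_j]$ (because $\FF$ is a simplex), this reduces the dynamics of $g$ and its powers on $\CC(\Sigma,\PP)$ to that of the pure multitwist $T_{a_1}^{k_1}\cdots T_{a_p}^{k_p}$.

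For the forward inclusion $\FF\subseteq\SS(g)$, each $[a_i]$ is immediately a reduction class since $g([a_i])=[a_i]$. For essentiality, given $[b]\in\CC(\Sigma,\PP)$ with $i([a_i],[b])\ne 0$ and $m\ne 0$, I rewrite $g^m=T_{a_1}^{mk_1}\cdots T_{a_p}^{mk_p}\,f_0^m$, so that $g^m([b])=T_{a_1}^{mk_1}\cdots T_{a_p}^{mk_p}([b])$. That this class differs from $[b]$ is the standard fact --- implicit in~\cite{BiLuMc1} --- that a nontrivial multitwist along a multicurve cannot fix any isotopy class transversely meeting a component with nonzero twist power; it is verifiable by a local calculation inside an annular neighborhood of $a_i$, where the twist $T_{a_i}^{mk_i}$ wraps each of the $i([a_i],[b])$ arcs of $b$ crossing $a_i$ exactly $mk_i$ times while the other twists leave that neighborhood untouched, forcing $i(g^m([b]),[b])>0$.

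For the reverse inclusion, let $[c]\in\SS(g)$. By Theorem~\ref{thm3_6}(1), $\SS(g)$ is a simplex, and since $\FF\subseteq\SS(g)$ by the first inclusion and any essential reduction class has zero intersection with any reduction class, $i([c],[a_i])=0$ for all $i$. Suppose for contradiction that $[c]\notin\FF$. I cut $\Sigma$ along $a_1,\dots,a_p$ and let $\Sigma'_c$ be the component containing a representative of $c$. If $[c]$ is peripheral in $\Sigma'_c$, then either $[c]$ bounds a disk in $\Sigma'_c$ (hence in $\Sigma$) with at most one puncture, contradicting genericity of $[c]$; or $[c]$ is parallel in $\Sigma'_c$ to a boundary component, which is either part of $\partial\Sigma$ (again contradicting genericity) or one of the two copies of some $a_i$ (forcing $[c]=[a_i]\in\FF$, contradiction). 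Otherwise $[c]$ is non-peripheral in $\Sigma'_c$, so $\Sigma'_c$ is not a pair of pants and there exists a non-peripheral simple closed curve $[b]\subset\Sigma'_c$ with $i_{\Sigma'_c}([c],[b])\ne 0$. Since $[b]$ is disjoint from $\FF$, $g([b])=f_0([b])=[b]$, directly violating the essentiality of $[c]$ at $m=1$.

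The main technical obstacle is confirming that the curve $[b]$ produced in the non-peripheral subcase is generic in the whole of $\Sigma$, not just in $\Sigma'_c$. Any disk in $\Sigma$ bounded by $[b]$ with at most one puncture would either lie entirely inside $\Sigma'_c$, contradicting non-peripherality of $[b]$ there, or enclose some $a_i$'s in its interior; but each such enclosed $a_i$ itself bounds a sub-disk containing at least two punctures by genericity of $[a_i]$, overflowing the allowed puncture count --- contradiction. Analogous reasoning, invoking that no $[a_i]$ is parallel to $\partial\Sigma$, rules out $[b]$ being parallel to $\partial\Sigma$ (which would force $[b]$ to equal some $[a_i]\in\FF$ and hence $i([c],[b])=0$).
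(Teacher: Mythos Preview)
The paper does not supply a proof of this proposition; it simply declares the result well-known and a direct consequence of \cite{BiLuMc1}, placing a \qed\ immediately after the statement. Your argument is along the standard lines one would use to extract the result from the Birman--Lubotzky--McCarthy theory, and it is essentially correct.

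Two minor points deserve tightening. In the reverse inclusion you produce $b$ inside the cut piece $\Sigma'_c$ with $i_{\Sigma'_c}([c],[b])\ne 0$, but the contradiction with essentiality requires $i_\Sigma([c],[b])\ne 0$; you should remark that these agree because $\Sigma'_c$, being a component of the surface cut along essential curves, is $\pi_1$-injective in $\Sigma$. Also, the parenthetical at the very end (``which would force $[b]$ to equal some $[a_i]\in\FF$\dots'') is muddled and not the right conclusion: the correct ``analogous reasoning'' is that an annulus in $\Sigma$ between $b$ and a component of $\partial\Sigma$ either lies in $\Sigma'_c$ (making $b$ peripheral there) or contains some $a_i$, forcing that $a_i$ itself to be boundary-parallel in $\Sigma$, which contradicts genericity of $a_i$. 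The heuristic annular picture in the forward inclusion is fine as motivation, but the clean statement you are invoking is the standard lemma (in \cite{BiLuMc1}, or Farb--Margalit) that a nontrivial multitwist along a multicurve fixes a curve class only if that class is disjoint from every component.
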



\section{Proofs}\label{sec4}

As pointed out in Section~\ref{sec3}, we start by determining the homomorphisms from $A[\tilde A_n]$ to $A[A_{n+1}]$.
As before, we set $Y = \{s_1', \dots, s_n'\} = \{t_1, \dots, t_n\}$, $\Delta_Y = \Delta_Y[A_{n+1}] = \Delta_Y [\tilde A_n]$ and $\Delta = \Delta[A_{n+1}]$.
Consider the following elements of $A[\tilde A_n] \subset A[A_{n+1}]$.
\begin{gather*}
u_0 = t_0\,,\ u_1 = \Delta_Y^{-1} t_0 \Delta_Y\,,\ v_0 = \rho t_n \rho^{-1} = t_1 \ldots t_{n-1} t_n t_{n-1}^{-1} \ldots t_1^{-1}\,,\\
v_1 = \rho' t_n \rho'^{-1} = t_1^{-1} \ldots t_{n-1}^{-1} t_n t_{n-1} \ldots t_1\,,
\end{gather*}
where $\rho = t_1 t_2 \ldots t_n$ and $\rho' = t_1^{-1} t_2^{-1} \ldots t_n^{-1}$.
One can easily see that the above elements are identified under the isomorphism $\Psi\colon A[A_{n+1}]\to \MM(\D,\PP_{n+2})$ with the following half-twists: $u_0 = H_{b_0}$, $u_1 = H_{b_1}$, $v_0 = H_{c_0}$ and $v_1 = H_{c_1}$, where $b_0, b_1, c_0, c_1$ are the arcs drawn in Figure~\ref{fig4_1}. (This can be checked using the property $fH_af^{-1}=H_{f(a)}$ for an orientation-preserving mapping class $f$.)

\begin{figure}[ht!]
\begin{center}
\includegraphics[width=4cm]{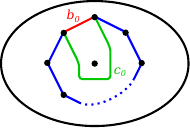}\qquad
\includegraphics[width=4cm]{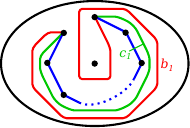}
\caption{Arcs in the disk with marked points}\label{fig4_1}
\end{center}
\end{figure}

\begin{prop}\label{prop4_1}
Let $n \ge 4$.
Let $\varphi\colon A[\tilde A_n] \to A[A_{n+1}]$ be a homomorphism.
Then we have one of the following three possibilities.
\begin{itemize}
\item[(1)]
$\varphi$ is cyclic.
\item[(2)]
There exist $g \in A[A_{n+1}]$, $k \in \{0,1\}$, $\varepsilon \in \{\pm 1\}$ and $q \in \Z$ such that $\varphi(t_i) = g s_i'^{\varepsilon} \Delta^{2q} g^{-1}$ for all $1 \le i \le n$, and $\varphi(t_0) = g u_k^ {\varepsilon} \Delta^{2q} g^{-1}$.
\item[(3)]
There exist $g \in A[A_{n+1}]$, $k \in \{0,1\}$, $\varepsilon \in \{\pm 1\}$ and $p, q \in \Z$ such that $\varphi(t_i) = g s_i'^{\varepsilon} \Delta_Y^{2p} \Delta^{2q} g^{-1}$ for all $1 \le i\le n$, and $\varphi(t_0) = g v_k^{\varepsilon} \Delta_Y^{2p} \Delta^{2q} g^{-1}$.
\end{itemize}
\end{prop}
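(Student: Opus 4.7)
The plan is to restrict $\varphi$ to the spherical parabolic subgroup $A_Y[\tilde A_n]=\langle t_1,\dots,t_n\rangle\simeq A[A_n]$ and apply Theorem~\ref{thm3_3}. This produces two cases: either $\varphi|_{A_Y}$ is cyclic, or, after conjugating $\varphi$ by a suitable element of $A[A_{n+1}]$, one has $\varphi(t_i)=s_i'^\varepsilon\Delta_Y^{2p}\Delta^{2q}$ for $1\le i\le n$, with $\varepsilon\in\{\pm1\}$ and $p,q\in\Z$. In both cases the only remaining unknown is $x:=\varphi(t_0)$, constrained by the commutations $xy_i=y_ix$ for $y_i:=\varphi(t_i)$ and $2\le i\le n-1$, together with the braid relations $xy_1x=y_1xy_1$ and $xy_nx=y_nxy_n$.

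The cyclic subcase is immediate. Writing $\varphi(t_i)=h$ for $1\le i\le n$, the commutations force $xh=hx$ and the braid identity forces $xhx=hxh$; using the former to rewrite the latter yields $hx^2=h^2x$, hence $x=h$, placing $\varphi$ in Case~(1).

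For the non-cyclic subcase I work inside $\MM(\D,\PP_{n+2})$ via the isomorphism $\Psi$, under which $y_i$ becomes $H_{a_i}^\varepsilon T_{c_Y}^{p}T_{\partial\D}^{q}$, where $c_Y$ is a circle enclosing $p_1,\dots,p_n$. Since $T_{\partial\D}^{q}=\Delta^{2q}$ is central, the commutations of $x$ with the $y_i$ reduce to commutation with $s_i'^\varepsilon\Delta_Y^{2p}$. Squaring this element gives $T_{b_i}^\varepsilon T_{c_Y}^{2p}$, where $b_i$ bounds a regular neighborhood of $a_i$; as $b_i$ and $c_Y$ are disjoint, Proposition~\ref{prop3_7} combined with Theorem~\ref{thm3_6}(2) identifies $\SS(s_i'^\varepsilon\Delta_Y^{2p})$ as $\{[b_i]\}$ when $p=0$ and as $\{[b_i],[c_Y]\}$ when $p\ne 0$. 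Theorem~\ref{thm3_6}(3) then forces $x$ to permute these sets for every $i\in\{2,\dots,n-1\}$. Because the classes $[b_i]$ are pairwise distinct and, thanks to $n\ge 4$, at least two such sets arise, a short combinatorial argument shows that $x$ fixes every $[b_i]$ individually and (when $p\ne 0$) also fixes $[c_Y]$.

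The main obstacle I anticipate is converting this curve-stabilization information into an exact formula for $x$. Conceptually, the stabilizer of the collection $\{[b_2],\dots,[b_{n-1}]\}$ (together with $[c_Y]$ when $p\ne 0$) decomposes into twists along the preserved curves and a mapping class group of a small region near the remaining punctures $p_0$ and $p_{n+1}$; inside this group, the two braid relations with $y_1$ and $y_n$, combined with Proposition~\ref{prop3_5}, force $x$ to be, up to central factors, a half-twist along an arc joining $p_0$ and $p_{n+1}$ compatible with all the constraints, and there are exactly four such isotopy classes, giving the half-twists $u_0,u_1,v_0,v_1$. The final dichotomy comes from the fact that $u_k=H_{b_k}$ fails to commute with $T_{c_Y}=\Delta_Y^2$ (the arc $b_k$ crosses $c_Y$), while $v_k=H_{c_k}$ does commute with it (the arc $c_k$ lies inside the subdisk bounded by $c_Y$). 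Hence a nonzero power $\Delta_Y^{2p}$ is only compatible with $v_k$-type formulas for $x$, producing Case~(3), while $p=0$ admits both $u_k$- and $v_k$-type solutions, producing both Case~(2) and Case~(3) with $p=0$. A routine verification that each candidate $x$, together with the corresponding $y_i$'s, satisfies all $\tilde A_n$-relations completes the proof.
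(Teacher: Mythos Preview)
Your overall strategy---restrict to $A_Y$, invoke Theorem~\ref{thm3_3}, then pin down $x=\varphi(t_0)$ from the remaining relations---matches the paper's, and your cyclic case is fine. But in the non-cyclic case you omit the one observation that drives the paper's argument: since $t_0$ is conjugate to $t_1$ in $A[\tilde A_n]$, the element $\varphi'(t_0)$ is \emph{conjugate} to $\varphi'(t_1)=H_{a_1}^\varepsilon T_d^p T_{\partial\D}^q$. This instantly gives $\varphi'(t_0)=H_{a'}^\varepsilon T_{d'}^p T_{\partial\D}^q$ for some arc $a'$ (and, when $p\neq0$, some circle $d'$ bounding $n+1$ punctures), so that Proposition~\ref{prop3_5} can be applied to the pair $H_{a'},H_{a_i}$. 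Without this step you have no reason to believe $x$ is, up to central factors, a half-twist at all, and Proposition~\ref{prop3_5} does not apply to arbitrary mapping classes.

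Your proposed workaround---describing the stabilizer of $\{[b_2],\dots,[b_{n-1}]\}$ as ``twists along the preserved curves plus a small mapping class group near $p_0$ and $p_{n+1}$''---does not hold: the curves $b_i=\hat a_i$ pairwise intersect (each $b_i$ encloses $\{p_{i-1},p_i\}$), so there is no cut-surface decomposition of their common stabilizer. A few other details also slip: the circle $c_Y$ (the paper's $d$) encloses $p_0,\dots,p_n$, not $p_1,\dots,p_n$; the arc underlying $x$ joins $p_0$ to $p_n$, not $p_0$ to $p_{n+1}$; and in the $p=0$ case there are infinitely many isotopy classes of admissible arcs (they can wind around $p_{n+1}$), not four---the paper reduces to $b_0,b_1,c_0,c_1$ only after a further conjugation by an element fixing $a_1,\dots,a_n$. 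Once you insert the conjugacy-of-$t_0$-and-$t_1$ step, the essential-reduction-system argument you sketch (forcing $d'=d$ when $p\neq0$) and the Proposition~\ref{prop3_5} analysis of $a'$ against the $a_i$ go through exactly as in the paper.
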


\begin{rem}
It can be shown that a homomorphism $\varphi\colon A[\tilde A_n] \to A[A_{n+1}]$, $n\ge4$, is never surjective, and it is injective if and only if it belongs to Case~(2) of Proposition~\ref{prop4_1}. 
\end{rem}

\begin{proof}[Proof of Proposition~\ref{prop4_1}]
Let $\varphi\colon A[\tilde A_n] \to A[A_{n+1}]$ be a homomorphism.
Recall the embedding $\iota_Y\colon A[A_n] \hookrightarrow A[\tilde A_n]$ which sends $s_i$ to $t_i$ for all $1 \le i \le n$.
By Theorem~\ref{thm3_3} we have one of the following two possibilities.
\begin{itemize}
\item[(1)]
$\varphi \circ \iota_Y$ is cyclic.
\item[(2)]
There exist $g \in A[A_{n+1}]$, $\varepsilon \in \{\pm 1\}$, and $p, q \in \Z$ such that $(\varphi \circ \iota_Y) (s_i) = g s_i'^\varepsilon \Delta_Y^{2p} \Delta^{2q} g^{-1}$ for all $1 \le i \le n$.
\end{itemize}

Suppose $\varphi \circ \iota_Y$ is cyclic.
There exists $h \in A[A_{n+1}]$ such that $(\varphi \circ \iota_Y) (s_i) = \varphi(t_i) = h$ for all $1 \le i \le n$.
We also have
\begin{gather*}
\varphi(t_0) = \varphi (t_1 t_0 t_1 t_0^{-1} t_1^{-1}) = \varphi (t_1 t_0)\, \varphi(t_1)\, \varphi (t_0^{-1} t_1^{-1}) = \varphi (t_1 t_0)\, \varphi(t_3)\, \varphi (t_0^{-1} t_1^{-1}) = \\
\varphi (t_1 t_0 t_3 t_0^{-1} t_1^{-1}) = \varphi(t_3) = h\,,
\end{gather*}
hence $\varphi$ is cyclic.

Now, assume that there exist $g \in A[A_{n+1}]$, $\varepsilon \in \{\pm 1\}$, and $p, q \in \Z$ such that $( \varphi \circ \iota_Y) (s_i) = g s_i'^\varepsilon \Delta_Y^{2p} \Delta^{2q} g^{-1}$ for all $1 \le i\le n$.
From here the proof is divided into two cases depending on whether $p \neq 0$ or $p = 0$.

{\it Case 1: $p \neq 0$.}
Let $\varphi' = \conj_{g^{-1}} \circ \varphi$.
We have $\varphi'(t_i) = s_i'^\varepsilon \Delta_Y^{2p} \Delta^{2q}$ for all $1 \le i \le n$.
It is easily seen that $\Delta_Y^2 = T_d$, where $d$ is the circle drawn in Figure~\ref{fig4_2}, $\Delta^2 = T_{\partial \D}$, where $T_{\partial \D}$ denotes the Dehn twist along the boundary of $\D$, and $s_i'=H_{a_i}$, where $a_i$ is the arc drawn in Figure~\ref{fig3_1}, for all $1\le i\le n$.
So, $\varphi'(t_i) = H_{a_i}^\varepsilon T_d^p T_{\partial\D}^q$ for all $1 \le i \le n$.
For each $1\le i\le n$ we denote by $\hat a_i$ the boundary of a regular neighborhood of $a_i$.
Then $\varphi'(t_i)^2 = T_{\hat a_i}^\varepsilon T_d^{2p} T_{\partial\D}^{2q}$, hence, by Proposition~\ref{prop3_7}, $\SS (\varphi'(t_i)) = \SS (\varphi'(t_i)^2) = \{[\hat a_i], [d]\}$ for all $1 \le i \le n$.

\begin{figure}[ht!]
\begin{center}
\includegraphics[width=4cm]{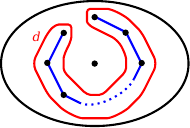}
\caption{A circle in the disk with marked points}\label{fig4_2}
\end{center}
\end{figure}

Since $t_0$ is conjugate to $t_1$ in $A[\tilde A_n]$, $\varphi'(t_0)$ is conjugate to $\varphi'(t_1)$ in $A[A_{n+1}]$. Hence $\varphi'(t_0)$ is of the form $\varphi'(t_0) = H_{a'}^\varepsilon T_{d'}^p T_{\partial\D}^q$, where $d'$ is a circle which bounds a disk containing $n+1$ marked points, and $a'$ is an arc inside this disk.
Moreover, if $\hat a'$ is the boundary of a regular neighborhood of $a'$, then $\SS(\varphi'(t_0)) = \{[\hat a'], [d']\}$.
Since $t_0t_2=t_2t_0$, we have $\varphi'(t_0)\,\varphi'(t_2)\,\varphi'(t_0)^{-1}=\varphi'(t_2)$. Hence, by Theorem~\ref{thm3_6}\,(3), $\varphi'(t_0) (\SS(\varphi'(t_2))) = \SS(\varphi'(t_2))$, and therefore $\varphi'(t_0) ([d]) = [d]$.
So, $[d]$ is a reduction class for $\varphi'(t_0)$.
Since $[d']$ is an essential reduction class for $\varphi'(t_0)$, it follows that $i([d'], [d]) = 0$.
So, we can assume without loss of generality that $d\cap d' = \varnothing$.
Each of the two circles $d$ and $d'$ bounds a disk containing $n+1$ marked points, and $d\cap d' = \varnothing$, hence $d'$ should be isotopic to $d$.
So, we can assume without loss of generality that $d = d'$.
Then $\varphi'(t_0) = H_{a'}^\varepsilon T_d^p T_{\partial\D}^q$.

Let $i \in \{2, \dots, n-1\}$.
We have $t_i\, t_0 = t_0\, t_i$, hence $\varphi'(t_i)\, \varphi'(t_0) = \varphi'(t_0)\, \varphi'(t_i)$.
Since, moreover, both $T_d$ and $T_{\partial\D}$ commute with $H_{a_i}$ and with $H_{a'}$, we have $H_{a_i} H_{a'} = H_{a'} H_{a_i}$.
By Proposition~\ref{prop3_5} we deduce that $a_i \cap a' = \varnothing$.
Let $i \in \{1, n\}$.
We have $t_i\, t_0\,t_i = t_0\, t_i\, t_0$, hence $\varphi'(t_i)\, \varphi'(t_0)\, \varphi'(t_i) = \varphi'(t_0)\, \varphi'(t_i)\, \varphi'(t_0)$.
Since, moreover, both $T_d$ and $T_{\partial\D}$ commute with $H_{a_i}$ and with $H_{a'}$, we have $H_{a_i} H_{a'} H_{a_i} = H_{a'} H_{a_i} H_{a'}$.
By Proposition~\ref{prop3_5} we deduce that $|a_i \cap a'|=1$ and $a_i \cap a' \subset \PP_{n+2}$.
It is easily seen that up to isotopy there exist exactly two arcs $a'$ satisfying $a'\cap d = \varnothing$, $a_i \cap a' = \varnothing$ for all $2 \le i \le n-1$, $|a_1 \cap a'|= |a_n \cap a'| =1$, $a_1 \cap a' \subset \PP_{n+2}$ and $a_n \cap a' \subset \PP_{n+2}$.
These two arcs are $c_0$ and $c_1$.
So, either $\varphi' (t_0) = H_{c_0}^\varepsilon T_d^p T_{\partial\D}^q$ or $\varphi' (t_0) = H_{c_1}^\varepsilon T_d^p T_{\partial\D}^q$. That is, either $\varphi(t_0) = g v_0^{\varepsilon} \Delta_Y^{2p} \Delta^{2q} g^{-1}$ or $\varphi(t_0) = g v_1^{\varepsilon} \Delta_Y^{2p} \Delta^{2q} g^{-1}$.

{\it Case 2: $p = 0$.}
Let $\varphi' = \conj_{g^{-1}} \circ \varphi$.
We have $\varphi'(t_i) = s_i'^\varepsilon \Delta^{2q} = H_{a_i}^\varepsilon T_{\partial\D}^q$ for all $1 \le i \le n$.
Since $t_0$ is conjugate to $t_1$ in $A[\tilde A_n]$, $\varphi'(t_0)$ is conjugate to $\varphi'(t_1)$ in $A[A_{n+1}]$. Hence $\varphi'(t_0)$ is of the form $\varphi'(t_0) = H_{a'}^\varepsilon T_{\partial\D}^q$, where $a'$ is an arc of $(\D, \PP_{n+2})$.

Let $i \in \{2, \dots, n-1\}$.
We have $t_i\, t_0 = t_0\, t_i$, hence $\varphi'(t_i)\, \varphi'(t_0) = \varphi'(t_0)\, \varphi'(t_i)$.
Since, moreover, $T_{\partial\D}$ commutes with $H_{a_i}$ and with $H_{a'}$, we have $H_{a_i} H_{a'} = H_{a'}  H_{a_i}$.
By Proposition~\ref{prop3_5} we deduce that $a_i \cap a' = \varnothing$.
Let $i \in \{1, n\}$.
We have $t_i\, t_0\, t_i = t_0\, t_i\, t_0$, hence $\varphi'(t_i)\, \varphi'(t_0)\, \varphi'(t_i) = \varphi'(t_0)\, \varphi'(t_i)\, \varphi'(t_0)$.
Since, moreover, $T_{\partial\D}$ commutes with $H_{a_i}$ and with $H_{a'}$, we have $H_{a_i} H_{a'} H_{a_i} = H_ {a'} H_{a_i} H_{a'}$.
By Proposition~\ref{prop3_5} we deduce that $|a_i \cap a'| = 1$ and $a_i \cap a' \subset \PP_{n+2}$. Since $a'$ does not intersect $a_i$ for any $2 \le i \le n-1$, we conclude that $a'\cap a_1$ is the singleton $\{p_0\}$, and $a' \cap a_n$ is the singleton $\{p_n\}$, so that $a'$ is an arc connecting $p_0$ to $p_n$.

The union $\left( \bigcup_{i=1}^n a_i \right) \cup a'$ bounds a disk $D$ embedded in $\D$.
This disk either contains $p_{n+1}$, or does not contain $p_{n+1}$.
If $D$ does not contain $p_{n+1}$, then it is easily seen that $a'$ is isotopic to $c_0$ or to $c_1$,  hence either $\varphi' (t_0) = H_{c_0}^\varepsilon T_{\partial\D}^q$ or $\varphi' (t_0) = H_{c_1}^\varepsilon T_{\partial\D}^q$, that is, either $\varphi(t_0) = g v_0^{\varepsilon} \Delta^ {2q} g^{-1}$ or $\varphi(t_0) = g v_1^{\varepsilon} \Delta^{2q} g^{-1}$.

Suppose now that $D$ contains $p_{n+1}$. For the rest of the proof we ``unmark'' point $p_{n+1}$ and consider isotopies of $\Omega=\big(\D,\{p_0,\dots,p_n\}\big)$, which are allowed to move point $p_{n+1}$. Under such isotopies, $a'$ is isotopic to either $c_0$ or $c_1$, as above. Let $\{F_t\colon\Omega\to\Omega\}_{t\in[0,1]}$ be such an isotopy, for which $F_0=\id$, $F_1(a')=c_k$, $(k\in\{0,1\})$, and $F_t$ is the identity on $\big(\bigcup_{i=1}^n a_i\big)\cup\partial\D$ for all $t\in[0,1]$. The reader should keep in mind that the disk $D$ can be embedded in $\D$ in a highly complicated manner, and the isotopy $F_t$ ``unwinds'' it to one of the two standard positions: $F_1(D)$ is bounded by $\bigcup_{i=1}^n a_i$ and $c_k$ (for $k=0$ or $1$), with $F_1(p_{n+1})$ belonging to $F_1(D)$. Now we choose another isotopy $\{F'_t\colon\Omega\to\Omega\}_{t\in[0,1]}$ such that $F'_0=\id$, $F'_1(c_k)=b_k$, $F'_t$ is the identity on $\big(\bigcup_{i=1}^n a_i\big)\cup\partial\D$ for all $t\in[0,1]$ and $F'_1(F_1(p_{n+1}))=p_{n+1}$. Obviously such isotopies exist. Let $\tilde F=F'_1\circ F_1$. Then $\tilde F\in \Homeo^+(\D,\PP_{n+2})$ and we let $h\in\MM(\D,\PP_{n+2})=A[A_{n+1}]$ be the mapping class $[\tilde F]$ represented by $\tilde F$. We see that $h ([a_i]) = [a_i]$ for all $1 \le i \le n$ and $h ([a']) = [b_k]$, for $k\in\{0,1\}$.
(Note that, by~\cite[Theorem 1.1]{Paris1}, we must have $h \in \langle \Delta_Y^2, \Delta^2 \rangle = \langle T_d, T_{\partial\D} \rangle$, but this fact is not needed for the proof.)
We have $h H_{a_i} h^{-1} = H_{a_i}$ for all $1 \le i \le n$ and $h H_{a'} h^{-1} = H_{b_k}$, hence
$(\conj_h \circ \varphi') (t_i) = s_i'^\varepsilon \Delta^{2q}$ for all $1 \le i\le n$ and $(\conj_h \circ \varphi') (t_0) = u_k^\varepsilon \Delta^{2q}$.
So, $\varphi(t_i) = g h^{-1} s_i'^{\varepsilon} \Delta^{2q} h g^{-1}$ for all $1 \le i \le n$ and $\varphi(t_0) = g h^{-1} u_k^{\varepsilon} \Delta^{2q} h g^{-1}$.
\end{proof}

\begin{proof}[Proof of Theorem~\ref{thm2_1}]
Let $\varphi\colon A[\tilde A_n] \to A[\tilde A_n]$ be a homomorphism.
Recall that $A[\tilde A_n]$ is viewed as a subgroup of $A[A_{n+1}]$, where $t_i = s_i'$ for all $1 \le i \le n$ and $t_0 = s_1' \ldots s_n' s_{n+1}'^2 s_n' s_{n+1}'^{-2} s_n'^{-1} \ldots s_1'^{-1}$.
Recall also that $Y = \{s_1', \dots, s_n'\} = \{t_1, \dots, t_n\}$, $\Delta_Y = \Delta_Y[A_{n+1}] = \Delta_Y[\tilde A_n]$ and $\Delta = \Delta[A_{n+1}]$.
By Proposition~\ref{prop4_1} we have one of the following three possibilities.
\begin{itemize}
\item[(1)]
$\varphi$ is cyclic.
\item[(2)]
There exist $g \in A[A_{n+1}]$, $k \in \{0, 1\}$, $\varepsilon \in \{\pm 1\}$ and $q \in \Z$ such that $\varphi(t_i) = g s_i'^{\varepsilon} \Delta^{2q} g^{-1}$ for all $1 \le i \le n$ and $\varphi(t_0) = g u_k^{\varepsilon} \Delta^{2q} g^{-1}$.
\item[(3)]
There exist $g \in A[A_{n+1}]$, $k \in \{0,1\}$, $\varepsilon \in \{\pm 1\}$ and $p, q \in \Z$ such that $\varphi(t_i) = g s_i'^{\varepsilon} \Delta_Y^{2p} \Delta^{2q} g^{-1}$ for all $1 \le i \le n$ and $\varphi(t_0) = g v_k^{\varepsilon} \Delta_Y^{2p} \Delta^{2q} g^{-1}$.
\end{itemize}

If $\varphi$ is cyclic, then there is nothing to prove.
So, we can assume that we are in Case~(2) or in Case~(3) of Proposition~\ref{prop4_1}.
More precisely, we suppose that there exist $g \in A[A_{n+1}]$, $w \in \{u_0, u_1, v_0, v_1\}$, $\varepsilon \in \{\pm 1\}$ and $p, q \in \Z$ such that $\varphi(t_i) = g s_i'^{\varepsilon} \Delta_Y^{2p} \Delta^{2q} g^{-1}$ for all $1 \le i \le n$, $\varphi(t_0) = g w^{\varepsilon} \Delta_Y^{2p} \Delta^{2q} g^{-1}$, and $p = 0$ if $w \in \{u_0, u_1\}$.

Recall the inclusions $A[\tilde A_n] \subset A[B_{n+1}] \subset A[A_{n+1}]$, where $r_i = s_i'$ for all $1 \le i \le n$ and $r_{n+1} = s_{n+1}'^2$. 
Recall also that $A[B_{n+1}] = \omega^{-1} (W_Y[A_{n+1}])$, where $\omega\colon A[A_{n+1}] \to W[A_{n+1}]$ is the   standard epimorphism which sends $s_i'$ to $s_i'$ for all $1 \le i \le n+1$ (see Remark~\ref{rem4} after Proposition~\ref{prop3_2}).
If we identify $A[A_{n+1}]$ with $\MM(\D,\PP_{n+2})$, $W[A_{n+1}]$ can be identified with the symmetric group $\Sym(\PP_{n+2})$ permuting the punctures, and hence $A[B_{n+1}]$ is isomorphic to the stabilizer of one puncture in $\MM(\D,\PP_{n+2})$, i.e.\ $A[B_{n+1}] = \{ f \in \MM(\D,\PP_{n+2}) \mid f(p_{n+1}) = p_{n+1}\}$. (In the braid group interpretation of $A[A_{n+1}]$, the subgroup $A[B_{n+1}]$ is exactly the subgroup that permutes all strands but one.)

We first prove that $g \in A[B_{n+1}]$.
By the above, it suffices to show that $g(p_{n+1}) = p_{n+1}$, or, equivalently, that $g^{-1}(p_{n+1}) = p_{n+1}$.
Suppose not, that is, suppose $g^{-1}(p_{n+1}) \neq p_{n+1}$.
Then there exists $i \in \{0, 1, \dots, n\}$ such that $g^{-1}(p_{n+1}) = p_i$.
Suppose $1 \le i \le n$.
On the one hand, $\varphi(t_i)(p_{n+1}) = p_{n+1}$, because $\varphi(t_i) \in A[\tilde A_n] \subset A[B_{n+1}]$.
On the other hand, $\varphi(t_i)$ is of the form $\varphi(t_i) = g s_i'^\varepsilon \Delta_Y^{2p} \Delta^{2q} g^{-1}$.
Note that, in the notation given in Figures~\ref{fig3_1} and \ref{fig4_2}, $s_i'^\varepsilon (p_i)=H_{a_i}^\varepsilon (p_i) = p_{i-1}$, $\Delta_Y^2 (p_i)=T_d (p_i)=p_i$ and $\Delta^2 (p_i) = T_{\partial\D} (p_i) = p_i$, hence
\[
\varphi(t_i) (p_{n+1}) = (g s_i'^\varepsilon \Delta_Y^{2p} \Delta^{2q} g^{-1}) (p_{n+1}) = (g s_i'^\varepsilon \Delta_Y^{2p} \Delta^{2q}) (p_i) = g(p_{i-1}) \neq p_{n+1}\,,
\]
because $g^{-1}(p_{n+1}) = p_i \neq p_{i-1}$.
This is a contradiction.
Now suppose $i = 0$.
On the one hand, $\varphi(t_1) (p_{n+1}) = p_{n+1}$, because $\varphi(t_1) \in A[\tilde A_n] \subset A[B_{n+1}]$.
On the other hand, $\varphi(t_1)$ is of the form $\varphi(t_1) = g s_1'^\varepsilon \Delta_Y^{2p} \Delta^{2q} g^{-1}$.
Note that $s_1'^\varepsilon (p_0)=H_{a_1}^\varepsilon (p_0) = p_{1}$, $\Delta_Y^2 (p_0)=T_d (p_0)=p_0$ and $\Delta^2 (p_0) = T_{\partial\D} (p_0) = p_0$, hence
\[
\varphi(t_1)(p_{n+1}) = (g s_1'^\varepsilon \Delta_Y^{2p} \Delta^{2q} g^{-1}) (p_{n+1}) = (g s_1'^\varepsilon \Delta_Y^{2p} \Delta^{2q}) (p_0) = g(p_1) \neq p_{n+1}\,,
\]
because $g^{-1}(p_{n+1}) = p_0 \neq p_1$.
This is a contradiction.
So, $g^{-1}(p_{n+1}) = p_{n+1}$, hence $g \in A[B_{n+1}]$.

Now, we prove that $q = 0$. We need the following lemma.
\begin{lem}\label{lem_delta} 
$\Delta^2=\Delta[A_{n+1}]^2=\Delta[B_{n+1}]$. 
\end{lem}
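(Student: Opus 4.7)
The plan is to show both $\Delta^2$ and $\Delta[B_{n+1}]$ lie in the infinite cyclic center of $A[B_{n+1}]$, and then fix the power by comparing abelianizations. The whole argument is elementary and I do not expect any serious obstacle.

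First I would verify that $\Delta^2$ belongs to $A[B_{n+1}]$. By Remark~\ref{rem4}, $A[B_{n+1}] = \omega^{-1}(W_Y[A_{n+1}])$, where $\omega\colon A[A_{n+1}] \to W[A_{n+1}]$ is the standard projection. The longest element $w_0 \in W[A_{n+1}]$ satisfies $w_0^2 = 1$, so $\omega(\Delta^2) = w_0^2 = 1 \in W_Y[A_{n+1}]$, placing $\Delta^2$ in $A[B_{n+1}]$.

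Next I would observe that $\Delta^2 \in Z(A[B_{n+1}])$: since $\Delta^2$ generates $Z(A[A_{n+1}])$ (recalled in Section~\ref{sec2}), it commutes in particular with every element of the subgroup $A[B_{n+1}]$. Combined with the fact recalled in Section~\ref{sec2} that $Z(A[B_{n+1}])$ is infinite cyclic, generated by $\Delta[B_{n+1}] = (r_1 r_2 \ldots r_{n+1})^{n+1}$, this yields $\Delta^2 = \Delta[B_{n+1}]^k$ for a unique integer $k$.

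Finally I would determine $k$ by abelianization. Applying the homomorphism $\mathrm{ab}\colon A[A_{n+1}] \to \Z$ that sends every generator $s_i'$ to $1$ gives $\mathrm{ab}(\Delta^2) = 2\lg(w_0) = (n+1)(n+2)$. On the other hand, under the identifications $r_i = s_i'$ for $1 \le i \le n$ and $r_{n+1} = (s_{n+1}')^2$, each factor $r_1 r_2 \ldots r_{n+1}$ has abelianized length $n+2$, so $\mathrm{ab}(\Delta[B_{n+1}]) = (n+1)(n+2)$ as well. Comparing forces $k=1$, and hence $\Delta^2 = \Delta[B_{n+1}]$.
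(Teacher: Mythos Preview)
Your proof is correct and takes a genuinely different route from the paper's. The paper simply quotes two explicit factorizations from the literature,
\[
\Delta[A_{n+1}]^2=(s_1'\ldots s_n's_{n+1}'^2s_n'\ldots s_1')\cdots(s_n's_{n+1}'^2s_n')s_{n+1}'^2
\]
and
\[
\Delta[B_{n+1}]=(r_1\ldots r_nr_{n+1}r_n\ldots r_1)\cdots(r_nr_{n+1}r_n)r_{n+1},
\]
and observes that under the identification $r_i=s_i'$ $(1\le i\le n)$, $r_{n+1}=(s_{n+1}')^2$ these are the same word. Your argument instead locates $\Delta^2$ inside $Z(A[B_{n+1}])$ via Remark~\ref{rem4} and centrality in $A[A_{n+1}]$, and then pins down the exponent by abelianization. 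The paper's proof is a one-line identity check but leans on two outside references; yours is self-contained within the facts already recalled in Section~\ref{sec2} and Remark~\ref{rem4}, and would transfer more readily to other spherical parabolics where one might not have such clean product formulas at hand.
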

\begin{proof}[Proof of Lemma~\ref{lem_delta}]
It is known that 
\begin{align*}
\Delta [A_{n+1}]^2 &= (s_1' \ldots s_n' s_{n+1}'^2 s_n' \ldots s_1') (s_2' \ldots s_n' s_{n+1}'^2 s_n' \ldots s_2') \ldots (s_n' s_{n+1}'^2 s_n') s_{n+1}'^2\,,\\
\Delta [B_{n+1}] &= (r_1 \ldots r_n r_{n+1} r_n \ldots r_1) (r_2 \ldots r_n r_{n+1} r_n \ldots r_2) \ldots (r_n r_{n+1} r_n) r_{n+1}\,,
\end{align*}
(see \cite[Lemma 5.1]{CasPar1} for the first equality and \cite[Lemma 4.1]{Paris1} for the second one), hence $\Delta^2 = \Delta [A_{n+1}]^2 = \Delta [B_{n+1}]$. This completes the proof of the lemma.
\end{proof}

Let $z\colon A[B_{n+1}] \to \Z$ be the homomorphism which sends $r_i$ to $0$ for all $1 \le i \le n$ and sends $r_{n+1}$ to $1$. From the formula given in Section~\ref{sec2} we know that $\Delta[B_{n+1}]=(r_1\ldots r_nr_{n+1})^{n+1}$, and hence,  by Lemma~\ref{lem_delta}, $z(\Delta^2)=z(\Delta[B_{n+1}])=(n+1)z(r_{n+1})=n+1$. Recall that $\Ker(z) = A[\tilde A_n]$.
We have $z(\Delta_Y^2) = 0$, because $\Delta_Y^2 \in \langle r_1, \dots, r_n \rangle\subset A[\tilde A_n]$, and we have $z(\varphi(t_1)) = 0$, since $\varphi (t_1) \in A[\tilde A_n]$ as well.
The element $\varphi(t_1)$ is of the form $\varphi(t_1) = g r_1 \Delta_Y^{2p} \Delta^{2q} g^{-1}$, hence we also have
\[
0=z(\varphi(t_1)) = z(g) +q(n+1) -z(g) =q(n+1)\,,
\]
which is possible only if $q=0$.

Let $\rho_B = r_1 \ldots r_n r_{n+1}$ be the element of $A[B_{n+1}]$ defined at the beginning of Section~\ref{sec3}.
By Proposition~\ref{prop3_1}, there exist $g_1 \in A[\tilde A_n]$ and $m \in \Z$ such that $g = g_1 \rho_B^m$.
Moreover, $\rho_B^m f \rho_B^{-m} = \zeta^m(f)$ for all $f \in A[\tilde A_n]$.
We set $\varphi' = \zeta^{-m} \circ \conj_{g_1^{-1}} \circ \varphi$.
Then $\varphi'(t_i) = s_i'^\varepsilon \Delta_Y^{2p} = t_i^\varepsilon \Delta_Y^{2p}$ for all $1 \le i \le n$, and $\varphi'(t_0) = w^\varepsilon \Delta_Y^{2p}$.

From Proposition~\ref{prop4_1} we see that if $w=u_0$ or $u_1$, then there is no $\Delta_Y^{2p}$ factor in the formulas for $\varphi$, i.e.\ $p=0$. Now, if $w = u_0$ and $\varepsilon = 1$, then $\varphi' = \id$, hence $\varphi = \conj_{g_1} \circ \zeta^m \circ \varphi' = \conj_{g_1} \circ \zeta^m$.
If $w = u_0$ and $\varepsilon = -1$, then $\varphi' = \mu$, hence $\varphi = \conj_{g_1} \circ \zeta^m \circ \varphi' = \conj_{g_1} \circ (\zeta^m \circ \mu)$.
If $w = u_1$ and $\varepsilon = 1$, then $\varphi' =\conj_{\Delta_Y^{-1}} \circ \zeta \circ \eta$, hence $\varphi = \conj_{g_1} \circ \zeta^m \circ \varphi' = \conj_h \circ (\zeta^{m+1} \circ \eta)$, where $h = g_1\cdot \zeta^m(\Delta_Y^{-1})$.
If $w = u_1$ and $\varepsilon = -1$, then $\varphi' = \conj_{\Delta_Y^{-1}} \circ \zeta \circ \eta \circ \mu$, hence $\varphi = \conj_{g_1} \circ \zeta^m \circ \varphi' = \conj_h \circ (\zeta^{m+1} \circ \eta \circ \mu)$, where $h = g_1\cdot \zeta^m(\Delta_Y^{-1})$. We conclude that if $w\in\{u_0,u_1\}$, then $\varphi$ has the form stated in Case~(2) of the theorem.

To understand the case $w\in\{v_0, v_1\}$ we must first observe that $\mu(v_0) = v_1^{-1}$ and $\mu(v_1) = v_0^{- 1}$.
If $w = v_0$ and $\varepsilon = 1$, then $\varphi' = \alpha_p$, hence $\varphi = \conj_{g_1} \circ \zeta^m \circ \varphi' = \conj_{g_1} \circ \zeta^m \circ \alpha_p$.
If $w = v_0$ and $\varepsilon = -1$, then $\varphi' = \mu \circ \beta_{-p}$, hence $\varphi = \conj_{g_1} \circ \zeta^m \circ \varphi' = \conj_{g_1} \circ (\zeta^m \circ \mu) \circ \beta_{-p}$.
If $w = v_1$ and $\varepsilon = 1$, then $\varphi' = \beta_p$, hence $\varphi = \conj_{g_1} \circ \zeta^m \circ \varphi' = \conj_{g_1} \circ \zeta^m \circ \beta_p$.
If $w = v_1$ and $\varepsilon = -1$, then $\varphi' = \mu \circ \alpha_{-p}$, hence $\varphi = \conj_{g_1} \circ \zeta^m \circ \varphi' = \conj_{g_1} \circ (\zeta^m \circ \mu) \circ \alpha_{-p}$. We conclude that if $w\in\{v_0,v_1\}$, then $\varphi$ has the form stated in Cases~(3) and (4) of the theorem. 

This completes the proof of Theorem~~\ref{thm2_1}.
\end{proof}

\begin{proof}[Proof of Proposition~\ref{prop2_2}]
Let $\psi, \psi' \in \Aut(A[\tilde A_n])$ and $p, q \in \Z$ such that $\psi \circ \alpha_p = \psi' \circ \alpha_q$.
Up to replacing $\psi$ with $\psi'^{-1} \circ \psi$ if necessary, we can assume that $\psi' = \id$.
Let $x\colon A[\tilde A_n] \to \Z$ be the homomorphism which sends $t_i$ to $1$ for all $0 \le i \le n$.
By Theorem~\ref{thm2_1} there exist $\psi_1 \in \Aut^*(\tilde A_n)$ and $g \in A[\tilde A_n]$ such that $\psi = \conj_g \circ \psi_1$.
From this decomposition it follows that either $x(\psi(t_i)) = 1$ for all $0 \le i \le n$, or $x(\psi(t_i)) = -1$ for all $0 \le i \le n$.
This implies that either $(x \circ \psi \circ \alpha_p) (t_1) = 1+pn(n+1)$, or $(x \circ \psi \circ \alpha_p) (t_1) = -1-pn (n+1)$.
Since $n(n+1) \ge 20$ and $(x \circ \alpha_q) (t_1) = 1+qn(n+1)$, it follows that $p = q$ and $(x \circ \psi \circ \alpha_p) (t_1) = 1+pn(n+1)$.

Let $\psi, \psi' \in \Aut(A[\tilde A_n])$ and $p, q \in \Z$.
We prove in the same way that, if $\psi \circ \beta_p = \psi' \circ \beta_q$, then $p = q$.
Similarly, if $\psi \circ \alpha_p = \psi' \circ \beta_q$, then $p = q$.
It remains to show that $\psi \circ \alpha_p \neq \psi' \circ \beta_p$.

Suppose $\psi \circ \alpha_p = \psi' \circ \beta_p$.
Up to replacing $\psi$ with $\psi'^{-1} \circ \psi$ if necessary, we can assume that $\psi' = \id$.
For each $1 \le i \le n$ we have
\[
\psi (t_i \Delta_Y^{2p}) = (\psi \circ \alpha_p) (t_i) = \beta_p (t_i) = t_i \Delta_Y^{2p}\,,
\]
hence, since $\Delta_Y^2$ commutes with $t_i$ for all $1 \le i \le n$,
\begin{gather*}
(\psi \circ \alpha_p) (t_0) = \psi (v_0 \Delta_Y^{2p}) = \psi \big( (t_1 \ldots t_{n-1} t_n t_{n-1}^{-1} \ldots t_1^{-1}) \Delta_Y^{2p} \big) = \\
\psi \big( (t_1 \Delta_Y^{2p}) \ldots (t_{n-1} \Delta_Y^{2p}) (t_n \Delta_Y^{2p}) (t_{n-1} \Delta_Y^{2p})^{-1}
\ldots (t_1 \Delta_Y^{2p})^{-1} \big) = \\
(t_1 \Delta_Y^{2p}) \ldots (t_{n-1}\Delta_Y^{2p}) (t_n\Delta_Y^{2p}) (t_{n-1}\Delta_Y^{2p})^{-1} \ldots (t_1 \Delta_Y^{2p})^{-1} = \\
(t_1 \ldots t_{n-1} t_n t_{n-1}^{-1} \ldots t_1^{-1}) \Delta_Y^{2p} = v_0 \Delta_Y^{2p}\,.
\end{gather*}
This is a contradiction because $\beta_p (t_0) = v_1 \Delta_Y^{2p}$, and $v_0 \neq v_1$ by Corollary~\ref{cor3_6}, since the respective half-twists $H_{c_0}=v_0$ and $H_{c_1}=v_1$ correspond to non-isotopic arcs $c_0$ and $c_1$ in $(\D,\PP_{n+2})$, see Figure~\ref{fig4_1}.
\end{proof}


\frenchspacing



\begin{thebibliography}{GHMR00}

\bibitem[AC23]{AnCho1}
{\bf B\,H An, Y Cho,}
{\it The automorphism groups of Artin groups of edge-separated CLTTF graphs,}
J.~Korean Math. Soc. 60 (6) (2023), 1171--1213.

\bibitem[BM06]{BelMar1}
{\bf R\,W Bell, D Margalit,}
{\it Braid groups and the co-Hopfian property,}
J. Alg. 303 (2006), 275--294.

\bibitem[BM07]{BelMar2}
{\bf R\,W Bell, D Margalit,}
{\it Injections of Artin groups,}
Comment. Math. Helv. 82 (2007), no. 4, 725--751.

\bibitem[BP20]{BelPar1}
{\bf P Bellingeri, L Paris,}
{\it Virtual braids and permutations,}
Ann. Inst. Fourier (Grenoble) 70 (2020), no. 3, 1341--1362.

\bibitem[BH73]{BirHil1}
{\bf J\,S Birman, H\,M Hilden,}
{\it On isotopies of homeomorphisms of Riemann surfaces,}
Ann. of Math. (2) 97 (1973), 424--439.

\bibitem[BLM83]{BiLuMc1}
{\bf J\,S Birman, A Lubotzky, J McCarthy,}
{\it Abelian and solvable subgroups of the mapping class groups,}
Duke Math. J. 50 (1983), no. 4, 1107--1120.

\bibitem[Bou68]{Bourb1}
{\bf N Bourbaki,}
{\it \'El\'ements de math\'ematique. Fasc. XXXIV. Groupes et alg\`ebres de Lie. Chapitre IV: Groupes de Coxeter et syst\`emes de Tits. Chapitre V: Groupes engendr\'es par des r\'eflexions. Chapitre VI: Syst\`emes de racines,} 
Actualit\'es Scientifiques et Industrielles No 1337. Hermann, Paris, 1968.

\bibitem[BCV23]{BrChVo1}
{\bf C Bregman, R Charney, K Vogtmann,}
{\it Outer space for RAAGs,}
Duke Math. J. 172 (2023), no.~6, 1033--1108.

\bibitem[Bri73]{Bries1}
{\bf E Brieskorn,}
{\it Sur les groupes de tresses [d'apr\`es V. I. Arnol'd],}
S\'eminaire Bourbaki, 24ème ann\'ee (1971/1972), Exp. No. 401, pp. 21--44.
Lecture Notes in Math., Vol. 317.
Springer-Verlag, Berlin--New York, 1973.

\bibitem[BS72]{BriSai1}
{\bf E Brieskorn, K Saito,}
{\it Artin-Gruppen und Coxeter-Gruppen,}
Invent. Math. 17 (1972), 245--271.

\bibitem[Cas09]{Caste1}
{\bf F Castel,}
{\it Repr\'esentations g\'eom\'etriques des groupes de tresses,}
Ph.\,D.\ Thesis, Universit\'e de Bourgogne, 2009.

\bibitem[CP23]{CasPar1}
{\bf F Castel, L Paris,}
{\it Endomorphisms of Artin groups of type D,}
Algebr. Geom. Topol., to appear. arXiv:2307.02880.

\bibitem[CC05]{ChaCri1}
{\bf R Charney, J Crisp,}
{\it Automorphism groups of some affine and finite type Artin groups,}
Math. Res. Lett. 12 (2005), no. 2--3, 321--333. 

\bibitem[CP03]{ChaPei1}
{\bf R Charney, D Peifer,}
{\it The $K(\pi,1)$-conjecture for the affine braid groups,}
Comment. Math. Helv. 78 (2003), no. 3, 584--600.

\bibitem[CV09]{ChaVog1}
{\bf R Charney, K Vogtmann,}
{\it Finiteness properties of automorphism groups of right-angled Artin groups,}
Bull. Lond. Math. Soc. 41 (2009), no. 1, 94--102. 

\bibitem[CV11]{ChaVog2}
{\bf R Charney, K Vogtmann,}
{\it Subgroups and quotients of automorphism groups of RAAGs,}
Low-dimensional and symplectic topology, 9--27, Proc. Sympos. Pure Math., 82, Amer. Math. Soc., Providence, RI, 2011. 

\bibitem[CKM19]{ChKoMa1}
{\bf L Chen, K Kordek, D Margalit,}
{\it Homomorphisms between braid groups,}
preprint, 2019. arXiv:1910.00712.

\bibitem[Cri99]{Crisp2}
{\bf J Crisp,}
{\it Injective maps between Artin groups,}
Geometric group theory down under (Canberra, 1996), 119--137.
Walter de Gruyter \& Co., Berlin, 1999.

\bibitem[Cri05]{Crisp1}
{\bf J Crisp,}
{\it Automorphisms and abstract commensurators of 2-dimensional Artin groups,}
Geom. Topol. 9 (2005), 1381--1441.

\bibitem[Dav08]{Davis1}
{\bf M\,W Davis,}
{\it The geometry and topology of Coxeter groups,}
London Mathematical Society Monographs Series, 32. 
Princeton University Press, Princeton, NJ, 2008.

\bibitem[Day09]{Day1}
{\bf M\,B Day,}
{\it Peak reduction and finite presentations for automorphism groups of right-angled Artin groups,}
Geom. Topol. 13 (2009), no. 2, 817--855.

\bibitem[Day11]{Day2}
{\bf M\,B Day,}
{\it On solvable subgroups of automorphism groups of right-angled Artin groups,}
Internat. J. Algebra Comput. 21 (2011), no. 1--2, 61--70.

\bibitem[Del72]{Delig1}
{\bf P Deligne,}
{\it Les immeubles des groupes de tresses g\'en\'eralis\'es,}
Invent. Math. 17 (1972), 273--302.

\bibitem[DG81]{DyeGro1}
{\bf J\,L Dyer, E\,K Grossman,}
{\it The automorphism groups of the braid groups,}
Amer. J. Math. 103 (1981), no. 6, 1151--1169.

\bibitem[FM12]{FarMar1}
{\bf B Farb, D Margalit,}
{\it A primer on mapping class groups,}
Princeton Mathematical Series, 49. 
Princeton University Press, Princeton, NJ, 2012. 

\bibitem[GHMR00]{GiHoMeRa1}
{\bf N\,D Gilbert, J Howie, V Metaftsis, E Raptis,}
{\it Tree actions of automorphism groups,}
J. Group Theory 3 (2000), no. 2, 213--223.

\bibitem[Iva97]{Ivano1}
{\bf N\,V Ivanov,}
{\it Automorphism of complexes of curves and of Teichmüller spaces,}
Internat. Math. Res. Notices (1997), no. 14, 651--666.

\bibitem[IM99]{IvaMcc1}
{\bf N\,V Ivanov, J\,D McCarthy,}
{\it On injective homomorphisms between Teichmüller modular groups.~I,}
Invent. Math. 135 (1999), no. 2, 425--486.

\bibitem[KT08]{KasTur1}
{\bf C Kassel, V Turaev,}
{\it Braid groups.}
With the graphical assistance of Olivier Dodane. 
Graduate Texts in Mathematics, 247. Springer, New York, 2008. xii+340 pp.

\bibitem[KP02]{KenPei1}
{\bf R\,P Kent IV, D Peifer,}
{\it  A geometric and algebraic description of annular braid groups,}
Internat. J. Algebra Comput. 12 (2002), no. 1--2, 85--97.

\bibitem[KM22]{KorMar1}
{\bf K Kordek, D Margalit,}
{\it Homomorphisms of commutator subgroups of braid groups,}
Bull. Lond. Math. Soc. 54 (2022), no. 1, 95--111.

\bibitem[Kor99]{Korkm1}
{\bf M Korkmaz,}
{\it  Automorphisms of complexes of curves on punctured spheres and on punctured tori,}
Topology Appl. 95 (1999), no. 2, 85--111.

\bibitem[Lam94]{Lambr1}
{\bf S\,F Lambropoulou,}
{\it Solid torus links and Hecke algebras of $\mathcal{B}$-type,}
Proceedings of the Conference on Quantum Topology (Manhattan, KS, 1993), 225--245.
World Scientific Publishing Co., Inc., River Edge, NJ, 1994.

\bibitem[Lau95]{Laure1}
{\bf M\,R Laurence,}
{\it A generating set for the automorphism group of a graph group,}
J. London Math. Soc. (2) 52 (1995), no. 2, 318--334.

\bibitem[Lek83]{Lek1}
{\bf H van der Lek,}
{\it The homotopy type of complex hyperplane complements,}
Ph. D. Thesis, Nijmegen, 1983.

\bibitem[Man97]{Manfr1}
{\bf S Manfredini,}
{\it Some subgroups of Artin's braid group,}
Topology Appl. 78 (1997), no. 1--2, 123\nobreakdash--142.

\bibitem[MW21]{MarWin1}
{\bf D Margalit, R\,R Winarski,}
{\it Braids groups and mapping class groups: the Birman--Hilden theory,}
Bull. Lond. Math. Soc. 53 (2021), no. 3, 643--659.

\bibitem[Ore24]{Orevk1}
{\bf S\,Y Orevkov,}
{\it Homomorphisms of commutator subgroups of braid groups with small number of strings,}
Ann. Fac. Sci. Toulouse Math. (6), 33 (2024) no.~1, pp.~105--121.

\bibitem[Par97a]{Paris2}
{\bf L Paris,}
{\it Parabolic subgroups of Artin groups,}
J. Algebra 196 (1997), no. 2, 369--399. 

\bibitem[Par97b]{Paris1}
{\bf L Paris,}
{\it Centralizers of parabolic subgroups of Artin groups of type $A_l$, $B_l$, and $D_l$,}
J. Algebra 196 (1997), no. 2, 400--435. 

\bibitem[PS25]{ParSor1}
{\bf L Paris, I Soroko,}
{\it Endomorphisms of Artin groups of type $B_n$,}
J. Algebra (2025), \url{https://doi.org/10.1016/j.jalgebra.2025.03.017}

\bibitem[Sor21]{Sorok1}
{\bf I Soroko,}
{\it Artin groups of types $F_4$ and $H_4$ are not commensurable with that of type $D_4$,}
Topology Appl. 300 (2021), Paper No. 107770, 15 pp.

\bibitem[Tit69]{Tits1}
{\bf J Tits,}
{\it Le probl\`eme des mots dans les groupes de Coxeter,}
1969 Symposia Mathematica (INDAM, Rome, 1967/68), Vol. 1, pp. 175--185, Academic Press, London.

\bibitem[Vas25]{Vasko1}
{\bf N Vaskou,}
{\it Automorphisms of large-type free-of-infinity Artin groups,}
Geom. Dedicata 219 (2025), Paper No.\ 16, 20 pp.

\end{thebibliography}
\end{document}